\documentclass[10pt,a4paper]{article}
\usepackage[utf8]{inputenc}
\usepackage[english]{babel}
\usepackage{amsmath}
\usepackage{amsthm}
\usepackage{amsfonts}
\usepackage{amssymb}
\usepackage{graphicx}
\usepackage[left=2cm,right=2cm,top=2cm,bottom=2cm]{geometry}

\usepackage{booktabs}
\usepackage{array}

\usepackage{hyperref}
\usepackage{booktabs}

\numberwithin{equation}{section}

\author{Fausto Colantoni\footnote{Ulm University, fausto.colantoni@uni-ulm.de} \and Mirko D'Ovidio\footnote{Sapienza University of Rome, mirko.dovidio@uniroma1.it}}

\newcommand{\indep}{\perp \! \! \! \perp}

\theoremstyle{plain}
\newtheorem{theorem}{Theorem}
\newtheorem{lemma}{Lemma}
\newtheorem{corollary}{Corollary}

\theoremstyle{remark}
\newtheorem{remark}{Remark}

\begin{document}
\title{On the telegrapher's signals of sticky local times}

\maketitle

\begin{abstract}
We consider the boundary trace process of a new class of sticky Brownian motions and  study the telegraph signals of the boundary local time. For the fractional telegraph equation
\begin{align*}
(\sigma/\eta) D^\alpha_t v(t,x)  + D^{2\alpha}_t v(t,x) = \frac{\partial^2 v}{\partial x^2}(t,x), \quad t>0,\, x \in \mathbb{R}, \quad \alpha \in (0,1)
\end{align*}
we provide a probabilistic representation of the solution in anisotropic Sobolev spaces and compare it with well-known representations in the literature. We subsequently discuss the associated processes and provide their pathwise representations. Based on these representations, we introduce a characterization of the local times for a wide class of sticky Brownian motions on $\Omega$ governed by the non-local dynamic boundary condition
\begin{align*}
\alpha \in (0,1], \qquad \eta D^\alpha_t \varpi(t,x) = - \sigma \partial_{\bf n} \varpi(t,x), \qquad t>0, \; x \in \partial \Omega
\end{align*}
where $D^\alpha_t$ denotes the fractional derivative in the Caputo-D\v{z}rba\v{s}jan sense. We focus mainly on the interval $[a,b]$ to construct a prototype model, and then lay the foundation for the analysis on balls of $\mathbb{R}^d$. The fractional telegraph equation smoothly interpolates between wave propagation and diffusion under anomalous dynamic. Stochastically, this behaviour is driven by the underlying sticky Brownian motion experiencing significantly prolonged trapping times on the boundary. The solution $v$ retains its continuity up to $t=0$ in $H^1(\mathbb{R})$, which corresponds to the mean-square continuity of the telegrapher's process at the initial instant. However, for $t>0$, the severe sticky effect causes the stochastic trajectories to undergo prolonged trapping periods leading to highly irregular and rough paths for the local time of the sticky Brownian motion. We remark that in our construction the Brownian structure appears immediately, rather than only as a hydrodynamic limit.
\end{abstract}

\tableofcontents

\newpage

\section{Introduction}

Fractional-order differential equations have become an essential tool for modelling non-local and memory-dependent evolutionary processes. A prime representative of such non-local formulations is the fractional telegraph equation, which arises as a fundamental model governing finite-velocity random flights, non-Markovian diffusion processes, and wave propagation in lossy, dispersive media. Motivated by the profound importance of this equation, this study aims to provide a probabilistic representation of the solution, valid for any fractional order $2\alpha \in (0,2]$, which holds in a weak sense. Crucially, the underlying path representations exhibit either a linear or a staircase behaviour that cannot be recovered in the weak sense alone. These two distinct path behaviours admit an interesting interpretation when associated with the local times of Brownian motion. For example, the linear path behaviour can be associated with a local time measured on the local time scale.

\subsection{Preliminary settings}
We consider the following setting:
\begin{itemize}
\item[-] $\{\theta_i\}_{i \in \mathbb{N}_0}$ is a sequence of i.i.d. random variables with
\begin{align*}
\mathbf{P}(\theta_0 = -1) = \mathbf{P}(\theta_0=+1) = \frac{1}{2};
\end{align*}
\item[-] $L=\{L_t\}_{t\geq 0}$ with $L_t = \inf\{s\geq 0\,:\, H_s >t\}$ is the inverse process of a stable subordinator $H=\{H_t\}_{t\geq 0}$;
\item[-] $N=\{N_t\}_{t \geq 0}$ is a Poisson process with $\mathbf{P}(N_t \geq j) = \mathbf{P}(T_j < t)$ and $\{T_j\}_{j \in \mathbb{N}_0}$ where
$$T_j - T_{j-1} =: e_j, \quad T_{-1}=0$$ 
is the sequence of independent exponential inter times;
\item[-] $N^L = \{N^L_t\}_{t\geq 0}$ is the renewal process obtained via time change as $N^L_t=N_{L_t}$. Define the sequence $\{T^L_j\}_{j \in \mathbb{N}_0}$ where
$$T^L_j - T^L_{j-1} =: e^L_j, \quad T^L_{-1}=0.$$ 
By exploiting the Markov nature of $H$, we know that (see \cite[Theorem 6.8 and Theorem 6.15]{EJP}) 
\begin{align*}
\textrm{$\{e^L_j\}_{j \in \mathbb{N}_0}$ is a sequence of i.i.d. random variables. In particular, $e^L_j \stackrel{d}{=} H_{e_j}$ for all $j$.}
\end{align*}
For simplicity, we proceed under the assumption $e_0=0$ and, for $j \in \mathbb{N}$, 
$$\mathbf{P}(e_j >t) = e^{- (\sigma/\eta) t}, \quad t \geq 0 \quad \textrm{with constants} \quad \eta>0,\, \sigma >0.$$
In particular, $e_0=0$ implies $T_0 = T^L_0=0$, $e^L_0=0$ and, for $j \in \mathbb{N}$,
\begin{align*}
\mathbf{P}(e^L_j > t) = E_\alpha(-(\sigma/\eta) t^\alpha), \quad t \geq 0
\end{align*}
is written in terms of the Mittag-Leffler function $E_\alpha(\cdot)$;
\item[-] We assume
\begin{align}
\textrm{$N \indep H$ and $\{\theta_i\}_{i \in \mathbb{N}} \indep \{e_i\}_{i \in \mathbb{N}}$.}
\label{AllIndep}
\end{align}
\item[-] Moreover, $X^+ =\{X^+_t\}_{t \geq 0}$ denotes a reflected Brownian motion on a domain with boundary local time $\gamma^+=\{\gamma^+_t\}_{t\geq 0}$. We denote by $\mathcal{F}^+_t = \sigma\{X^+_s, \, 0\leq s < t\}$ the natural filtration and write 
\begin{align*}
\mathbf{E}[f(X^+_s)| \mathcal{F}^+_t] = \mathbf{E}[f(X^+_s)|X^+_t], \quad t \leq s,
\end{align*}
for a function $f$ sufficiently regular.
\end{itemize}

\subsection{Fractional derivatives in a nutshell}
 
The non-local operator in time $D^\alpha_t$ is termed Caputo-D\v{z}rba\v{s}jan derivative of order $\alpha$ and it is defined as 
\begin{align}
D^\alpha_t \varphi(t,x) :=
\left\lbrace
\begin{array}{ll}
\displaystyle  \frac{1}{\Gamma(n-\alpha)} \int_0^t \frac{\partial^n \varphi}{\partial s^n}(s,x) (t-s)^{n-\alpha-1} ds, & n = \lceil \alpha \rceil,\; \alpha \in (0,1) \cup (1,2), \\
\\
\displaystyle \frac{\partial \varphi}{\partial t}(t,x), & \alpha=1,\\
\\
\displaystyle \frac{\partial^2 \varphi}{\partial t^2}(t,x), & \alpha=2.
\end{array}
\right.
\label{CDderDef}
\end{align}  
For $z>0$, $\Gamma(z) = \int_0^\infty s^{z-1} e^{-s}ds$ is the absolutely convergent Euler integral. As usual, we refer to $\Gamma(\cdot)$ as the gamma function. The Laplace transform 
\begin{align}
\int_0^\infty e^{-\lambda t} D_t^{\alpha} \varphi\, dt = \lambda^{\alpha} \int_0^\infty e^{-\lambda t} \varphi\, dt - \sum_{k=0}^{n-1} \lambda^{\alpha -1-k}\varphi^{(k)}(0), \quad \lambda>0
\label{LAPder}
\end{align}
is a powerful tool for analysing the operator $D^\alpha_t$. The Caputo-D\v{z}rba\v{s}jan fractional derivative has been introduced in \cite{caputoBook, CapMai71, CapMai71b} by Caputo (together with Mainardi) and separately in a series of works starting from \cite{Dzh66, DzhNers68} by D\v{z}rba\v{s}jan.

\subsection{A brief review on the integrated telegraph process}
\label{sec:introTel}

Since the seminal works of Goldstein \cite{Goldstein51} and Kac \cite{Kac74}, the telegraph process has served as the fundamental stochastic counterpart to the classical telegraph equation, effectively modeling transport phenomena with finite velocity. The telegraph process (also known as the finite-velocity random walk) describes the motion of a particle moving along a straight line at a constant speed, reversing its direction at random times governed by a Poisson process. This probabilistic model overcomes a key physical limitation of standard Brownian motion, which implies infinite propagation speed and non-differentiable trajectories. \\

Let us denote by $Y = \{Y_t\}_{t\geq 0}$ the telegraph process. The particle starts at position $Y_0$ with initial velocity $\mathcal{V}_0$, then we have a path representation for $t>0$,
\begin{align*}
Y_t = Y_0 + \int_0^t \mathcal{V}_s ds, \quad \mathcal{V}_s := \mathcal{V}_0 (-1)^{N_s}
\end{align*}
with
\begin{align*}
\mathbf{P}(-ct \leq Y_t -Y_0 \leq ct) = 1 \quad \textrm{for $t>0$ and} \quad \mathbf{P}(|\mathcal{V}_0|=c)=1.
\end{align*}
The speed is strictly bounded and the particle stays confined within a physical wavefront. The velocity is driven by a homogeneous Poisson process $N$ with intensity $\eta/\sigma$, alternating between $+c$ and $-c$ at every Poisson event. The detailed formula for the position $Y$ can be given as the sum of displacements over the random (independent and exponentially distributed) time intervals, that is
\begin{align}
Y_t = Y_0 + \mathcal{V}_0 \sum_{j=1}^{N_t} (-1)^{j-1} \big(T_j - T_{j-1} \big) + \mathcal{V}_0 (-1)^{N_t} \big( t - T_{N_t}\big)
\label{seriesTEL}
\end{align}
and
\begin{align*}
\mathbf{P}(T_j - T_{j-1} > t) = e^{-(\sigma/\eta) t}.
\end{align*}
The singular part of the probability measure represents the event of zero reversals, where the particle moves deterministically along the wavefronts. From the (linear) residual process, it follows that
\begin{align*}
\mathbf{P}(Y_t - Y_0 \in dx\, | \, N_t=0) = \frac{1}{2}\Big(\delta_{+ct}(dx) + \delta_{-ct}(dx) \Big)
\end{align*}
is written in terms of Dirac measures. The absolutely continuous part of the probability measure describes the particle's position on $(Y_0-ct, Y_0+ct)$. Thus, for the process $Y$ on $[x-ct, x+ct]$ and a function $f$ sufficiently regular,  
\begin{align*}
w(t,x) = \mathbf{E}[f(Y_t)\mathbf{1}_{(t< T_1)}\,|\, Y_0=x] + \mathbf{E}[f(Y_t)\mathbf{1}_{( t \geq T_1)}\,|\,Y_0=x] 
\end{align*}
solves the hyperbolic telegraph equation
\begin{align*}
(\sigma/\eta) \frac{\partial w}{\partial t}(t,x)  + \frac{\partial^2 w}{\partial t^2}(t,x) = c^2 \frac{\partial^2}{\partial x^2} w(t,x), \quad t>0,\, x \in \mathbb{R}
\end{align*}
with initial and boundary conditions
\begin{equation*}
\left\lbrace
\begin{array}{l}
\displaystyle w(0,x) = f(x), \\
\\
\displaystyle \frac{\partial w}{\partial t}(t, x) \bigg|_{t=0} = 0.
\end{array}
\right .
\end{equation*}
The equation bridges wave-like behaviour at short time scales (due to the second-order time derivative) and diffusive behaviour at long time scales, where the process scales to a classic Brownian motion under a suitable Kac-scaling limit as $(\eta/\sigma) \to \infty$, $c \to \infty$.\\

The fractional telegraph process (or fractional telegraph equation) is an extension of the classical model designed to capture complex physical phenomena where memory effects and long-range dependencies play a crucial role. It is widely used to model anomalous transport, heat conduction in porous media, and diffusion in disordered systems. This generalization is achieved by replacing the standard time derivatives in the classical equation with fractional derivatives (typically in the Caputo or Riemann-Liouville sense) of non-integer orders. Unlike the classical telegraph process (with exponential inter-arrival times and a "memoryless" property) the fractional version governs direction reversals using waiting times with heavy-tailed distributions (the Mittag-Leffler distribution). This introduces a long-term memory, meaning the particle's next move depends heavily on its past history. A direct generalization of the Kac's representation leads to
\begin{align*}
Y^\alpha_t
= & Y^\alpha_0 + \mathcal{V}_0 \int_0^t (-1)^{N^L_s} ds\\
= & Y^\alpha_0 + \mathcal{V}_0 \sum_{j=1}^{N^L_t} \int_{T^L_{j-1}}^{T^L_j} (-1)^{j-1}\, ds + \mathcal{V}_0 \int_{T^L_{N^L_t}}^t (-1)^{N^L_s} ds\\
= & Y^\alpha_0 + \mathcal{V}_0 \sum_{j=1}^{N^L_t} (-1)^{j-1} \big(T^L_j - T^L_{j-1} \big)  + \mathcal{V}_0 (-1)^{N^L_t} \big( t - T^L_{N^L_t} \big).
\end{align*}
The process $Y^\alpha=\{Y^\alpha_t\}_{t \geq 0}$ is still confined with bounded speed. Recently, in \cite[Section 5.3]{ToaRic}, $Y^\alpha$ is considered as a special case of an isotropic transport process.

Concerning the fractional telegraph equation of order $2\alpha \in (0,2]$, 
\begin{align*}
(\sigma/\eta) D^\alpha_t v(t,x) + D^{2\alpha}_t v(t,x) = c^2 \frac{\partial^2 v}{\partial x^2} (t,x), \quad t>0,\; x \in \mathbb{R}
\end{align*}
with initial and boundary conditions
\begin{equation*}
\left\lbrace
\begin{array}{ll}
\displaystyle v(0,x) = f(x), & \textrm{for $\alpha \in (0,1]$},\\
\\
\displaystyle \frac{\partial v}{\partial t}(t, x) \bigg|_{t=0} = 0, &  \textrm{to be considered only for $\alpha \in (1/2, 1]$},
\end{array}
\right .
\end{equation*}
the analytical representation of the solution for any $\alpha$ has been extensively investigated, beginning with the pioneering work of \cite{CASCAVAL2002145} and \cite{OrsBeg}, and later extended to the case of space-fractional operators in \cite{OrsZhao} and to higher-dimensional generalizations in \cite{OrsDeG}. In \cite{CASCAVAL2002145} and \cite{OrsBeg} the probabilistic representation for $0 < 2\alpha < 2$ is realized through a time-change approach from the standard process, that is, from the path perspective, it corresponds to consider the process
\begin{align*}
Y_{L_t} = Y_0 + \mathcal{V}_0 \sum_{j=1}^{N^L_t} (-1)^{j-1} \big(T_j - T_{j-1} \big) + \mathcal{V}_0 (-1)^{N^L_t} \big( L_t - T_{N^L_t}\big).
\end{align*}
The process $Y_{L} = \{Y_{L_t}\}_{t\geq 0}$ is considered as a random flight in the one-dimensional case in \cite[see formula (4.9)]{AngDeGGarIaf}. The continuous time $L_{t}$ introduces a typical "staircase behaviour" to the trajectories. The corresponding residual process maintains the memory through the term $(L_t - T_{N^L_t})$ and, due to the smooth distribution of $L_{t}$, does not introduce a singular part in the density measure. On the other hand, the density of the process $Y^\alpha$ maintains the singular part of the density and therefore the original finite-velocity wave nature of the telegraph process. 

To the best of our knowledge, specific literature focusing explicitly on this pathwise formulation appears to be scarce. Given the vastness of the field, the authors acknowledge that relevant contributions might exist elsewhere.\\

For $\xi \in \mathbb{R}$ and a sufficiently regular function $f$, the solution to
\begin{align*}
(\sigma/\eta) D^\alpha_t \hat{v}(t, \xi) + D^{2\alpha}_t \hat{v}(t, \xi) = -\xi^2 \hat{v}(t, \xi), \quad t >0
\end{align*}
with
\begin{equation*}
\hat{v}(0, \xi) = \hat{f}(\xi) \quad \textrm{for $\alpha \in (0, 1/2]$ \quad and} \quad \left\lbrace 
\begin{array}{l}
\hat{v}(0, \xi) = \hat{f}(\xi),\\
\partial_t \hat{v}(0, \xi) =0
\end{array}
\right. \quad \textrm{ for $\alpha \in (1/2, 1]$}
\end{equation*}
can be written in terms of the kernel $\mathcal{K}$, that is
\begin{align}
\hat{v}(t,\xi) = \hat{f}(\xi) \mathcal{E}_\alpha(t, \xi), \quad \mathcal{E}_\alpha(t, \xi) = \int_0^\infty \mathcal{K}(t, s) e^{-s \xi^2}ds
\label{mathcE}
\end{align}
where, for $\lambda>0$, $\xi \in \mathbb{R}$,
\begin{align*}
\int_0^\infty e^{-\lambda t} \left( \int_0^\infty \mathcal{K}(t, s) e^{-s \xi^2}ds\right) dt  = \frac{\lambda^{\alpha - 1} (\lambda^\alpha + \sigma/\eta)}{\lambda^\alpha (\sigma/\eta + \lambda^\alpha) + \xi^2}.
\end{align*}
We skip the calculation of $\mathcal{E}_\alpha$ and refer to \cite{CASCAVAL2002145} and \cite{OrsBeg} for details. We only recall that (formula (1.5) in \cite{OrsBeg})
\begin{align*}
\mathcal{E}_\alpha(t, \xi) = E^1(t, \xi) + E^2(t, \xi)
\end{align*}
where ($c=1$ in our discussion)
\begin{align*}
E^1(t, \xi) = \frac{1}{2} \left(1 + \frac{\sigma/\eta}{\sqrt{(\sigma/\eta)^2-c^2 \xi^2}}\right) E_\alpha((-\sigma/\eta + \sqrt{(\sigma/\eta)^2-c^2 \xi^2})t^\alpha) 
\end{align*}
\begin{align*}
E^2(t, \xi) = \frac{1}{2} \left(1 - \frac{\sigma/\eta}{\sqrt{(\sigma/\eta)^2-c^2 \xi^2}}\right) E_\alpha((-\sigma/\eta - \sqrt{(\sigma/\eta)^2-c^2 \xi^2})t^\alpha)
\end{align*}
and $E_\alpha(\cdot)$ is the Mittag-Leffler function. In the literature, different researchers have dealt with different functional settings. We focus our analysis on the anisotropic Sobolev space $H^{2\alpha, 1}$. In the recent work \cite{AkgPobPoz23}, the authors proved that the Carleman condition holds, which implies that the distribution of the telegraph process is uniquely determined by its moments. This result is strictly related to the strong continuity we discuss in the last part of the work. We also refer to the interesting contributions in \cite{FigCamChiCap08} and the more recent \cite{DeGGar25} and \cite{CinOrs23}. In \cite{GionaDCCK} the Markovian case is considered with special focus on the age representation. Furthermore, a generalized fractional partial differential equation involving a regularized operator related to the Prabhakar operator was considered in \cite{DovPol}. An alternative probabilistic representation of the solution to the telegraph-diffusion equation is established for the specific range of the fractional parameter $\alpha \in (0,1/2]$ (see \cite{DovOrsToa14}, \cite{DovToaOrs14}). In this case the path is obtained from the Brownian motion via time change with the inverse of the sum of stable subordinators. A connection with subordinators is also established in \cite{LLPMee} in terms of the d'Alembert formula for $\alpha \in (1,2]$.   \\

The telegraph process can be naturally conceptualized as a coupled continuous-time random walk. Since the literature on both the coupled and uncoupled (Montroll-Weiss) cases is quite vast, we refer the reader to the works \cite{Meer04,BioToa26} and \cite{ASTT26,SCALAS2006} for an exhaustive discussion and a closer look at financial applications. Our models involve resetting at each renewal epoch, we refer to \cite{ColPag} and the references therein for an overview of this setting.

\subsection{Main results}

In this section we first consider the sequence $\theta := \{\theta_j\}_{j \in \mathbb{N}_0}$ of random variables, and then introduce the Markov chain $\theta^* := \{\theta^*_j\}_{j \in \mathbb{N}}$ by sampling a continuous-time Markov chain at the renewal epochs.

\subsubsection{The processes $Z$ and $Z^\mathtt{tel}$}
We introduce the processes $Z=\{Z_t\}_{t \geq 0}$ and $Z^\mathtt{tel}=\{Z^\mathtt{tel}_t\}_{t\geq 0}$ defined by
\begin{align*}
Z_t = \sum_{j=1}^{N^L_t} \theta_j e_j \quad \textrm{and} \quad Z^\mathtt{tel}_t  = Z_t + \theta_{1 + N^L_t} \big(L_t - T_{N^L_t}\big)
\end{align*}
and establish their relationship with the local time of sticky Brownian motion. The residual process of the telegraph process can be either linear or non-decreasing, just as the boundary local time of a sticky Brownian motion can be measured on either the local time scale or the physical time scale.\\

The structural properties of $Z$ are introduced in Section~\ref{sec:Z}, while the continuous variant $Z^\mathtt{tel}$ is thoroughly detailed in Section~\ref{sec:Ztel}. We show that the processes $Z$ and $Z^\mathtt{tel}$ provide probabilistic solutions to the fractional telegraph equation. The coupled process $Z$ belongs to the class of coupled pure jump renewal processes (or coupled continuous-time random walks), where the jump amplitudes and the waiting times are intrinsically dependent through the sequence of underlying exponential variables $\{e_j\}_{j \in \mathbb{N}}$. Starting from the pure jump renewal process $Z_t$, we introduce its continuous-time evolution variant $Z^\mathtt{tel}$.  Within each renewal interval $[T^L_{j-1}, T^L_j)$, the particle moves along the path of $L$ with a random velocity $\theta_j = \pm 1$. For $\alpha \in (0,1]$ (see Theorem \ref{thm:Zpde} and Theorem \ref{thm:Ztelpde}), under standard boundary conditions, the expectations
\begin{align*}
u(t,x) = \mathbf{E}[f(x + Z_t)] \quad \textrm{and} \quad v(t,x) = \mathbf{E}[f(x + Z^\mathtt{tel}_t)]
\end{align*}
respectively solve
\begin{align*}
(\sigma/\eta) D^\alpha_t u(t,x)  + D^{2\alpha}_t u(t,x) = \frac{\partial^2}{\partial x^2} \bigg[ u(t,x) - f(x) \mathbf{P}(T^L_1 > t) \bigg], \quad t>0,\, x \in \mathbb{R}
\end{align*}
and 
\begin{align*}
(\sigma/\eta) D^\alpha_t v(t,x)  + D^{2\alpha}_t v(t,x) = \frac{\partial^2 v}{\partial x^2}(t,x), \quad t>0,\, x \in \mathbb{R}.
\end{align*}
The processes $Z$ and $Z^\mathtt{tel}$ capture different physical insights into the same underlying integrated telegraph-like motion, as visually contrasted in Figure~\ref{fig:uno} and Figure~\ref{fig:alpha} below. Moreover, 
\begin{align*}
c \, Z^\mathtt{tel} \textrm{ is equal in law to a fractional telegraph process $Y_{L}$}
\end{align*} 
in terms of their one-dimensional laws. Moreover, due to the local fluctuations of residual process, it holds that 
\begin{align*}
Z^\mathtt{tel} \textrm{ exhibits a stochastic velocity}.
\end{align*} 
 
\subsubsection{Sticky behaviour at different scales}

In Section \ref{sec:Sticky}  we establish a relationship between $Z^\mathtt{tel}$ and the difference of local times 
\begin{align*}
\gamma^+_{r^+_t}(a) - \gamma^+_{r^+_t}(b)
\end{align*}  
where the inverse process $r^+_t$ is such that
\begin{align*}
\gamma^+_{r^+_t} = t \quad \textrm{for} \quad \gamma^+_t = \gamma^+_t(a) + \gamma^+_t(b)
\end{align*}
and $\gamma^+_t$ is the total boundary local time of the reflected Brownian motion $X^+$ on $[a,b]$. While our results can be extended to cases where $X^+$ is defined on a smooth domain $\Omega$, we do not provide a fully generalized formulation here, but rather offer an exploratory discussion (Section \ref{sec:OmegaSmooth}). The core insight revolves around the local time of the process $\bar{X}$ which belongs to a class of sticky processes as defined in \cite{EJP}. Specifically, we write
\begin{align*}
\bar{X}_t = X^+_{\bar{V}^{-1}_t} \quad \textrm{where} \quad \bar{V}_t = t + H_{\gamma^+_t} 
\end{align*} 
and the process $\bar{X}$ is termed sticky, meaning that it spends a non-zero Lebesgue measure of time on the boundary. For $\alpha=1$ we write $X$ in place of $\bar{X}$ and $V_t = t + \gamma^+_t$, this process corresponds to the well-known Feller-Wentzell sticky Brownian motion. 

We mainly focus on the interval $[a,b]$ (and provide a brief discussion for balls of $\mathbb{R}^d$). To ensure symmetry in the dynamics, we consider a midpoint (or membrane with equidistant boundary layers for balls of dimension $d>1$), although we recognize the potential interest of the asymmetric case. Formula \eqref{equivXZ} is an example of a totally asymmetric dynamics. We prove (Theorem~\ref{thm:difUNO}) that
\begin{align*}
\textrm{for every fixed $t\geq 0$} \quad \gamma^+_{r^+_t}(a) - \gamma^+_{r^+_t}(b) \stackrel{d}{=} Z^\mathtt{tel}_t  \quad \textrm{with} \quad \alpha =1
\end{align*}
where $\gamma^+_{r^+_t}(a) - \gamma^+_{r^+_t}(b)$ is evaluated on the local time scale and $Z^\mathtt{tel}_t$ is driven by a linear residual process as expected (see formula \eqref{repW}). Then, we consider
\begin{align*}
\tilde{V}_t := \bar{V}_t - t = H_{\gamma^+_t}
\end{align*}
which is a residence time for the sticky Brownian motion $\bar{X}$ and
we prove (Theorem \ref{thm:difALPHA}) that
\begin{align*}
\textrm{for every fixed $t\geq 0$} \quad  \gamma^+_{\tilde{V}^{-1}_t}(a) - \gamma^+_{\tilde{V}^{-1}_t}(b) \stackrel{d}{=} Z^\mathtt{tel}_t  \quad \textrm{for} \quad \alpha \in (0,1].
\end{align*}
In this framework, the residual process of $Z^\mathtt{tel}$ is formulated in terms of the non-decreasing inverse $L$, accurately capturing the Mittag-Leffler time required by the telegraph process to reach the subsequent jump level. Recall that the residual process is linear for $Y^\alpha$. \\

In the last part of the work we obtain pathwise representations and replace the r.v. $\theta$ with a continuous time Markov chain $\theta^*$. We prove (Theorem \ref{thmDiffLT}) that
\begin{align}
\gamma^+_{r^+_t}(a) - \gamma^+_{r^+_t}(b) = \sum_{j=1}^{N_t} \theta^*_j e_j + \theta^*_{N_t + 1} \left( t - T_{N_t} \right)
\label{MarkChain}
\end{align}
and (Theorem \ref{thm:DifLTbar}) that
\begin{align}
\bar{\gamma}_{\bar{r}_t}(a) - \bar{\gamma}_{\bar{r}_t}(b) = \sum_{j=1}^{N^L_t} \theta^*_j e^L_j + \theta^*_{N^L_t+1} \left( t - T^L_{N^L_t} \right)
\label{TelMarkChain}
\end{align}
for a suitable choice of the Markov chain $\theta^*$ with state space $\{-1, +1\}$, see Section \ref{sec:Path}.\\ 

As a by-product of our results, for $\alpha \in (0,1]$, we obtain that for every fixed $t\geq 0$, 
\begin{align*}
\bar{\gamma}_{\bar{r}_t}(a) - \bar{\gamma}_{\bar{r}_t}(b) 
\end{align*}
is equal in distribution to
\begin{align*}
Y^\alpha_t - Y^\alpha_0 = \theta^*_0 \int_0^t (-1)^{N^L_s}\, ds
\end{align*}
where $\theta^*_0 = \mathcal{V}_0$ with $c=1$. Finally, for $\alpha \in (0,1]$, we obtain that 
\begin{align*}
\bar{\gamma}_{\bar{r}_t}(a) - \bar{\gamma}_{\bar{r}_t}(b) = \int_0^t \theta^*_{N^L_s + 1}\,  ds 
\end{align*}
holds pathwise, for all $t \geq 0$. From our perspective, this result is of particular interest, as it yields novel insights into the process $Y^{\alpha}$, which has hitherto been overlooked in the literature.\\

It is well known that the fractional telegraph process $Y_L$ converges to a time-changed Brownian motion under an appropriate hydrodynamic limit. From the process $Z^\mathtt{tel}$ and the representation \eqref{TelMarkChain} we know that the telegraph process (as the process associated with the fractional equation) is already linked to this limit process via the Markov chain $\theta^*$ (which is associated with $X^+$ and therefore with $\bar{X}$). This connection does not appear in $Y_L$. In Section \ref{sec:further} we provide more details. In addition, in Section \ref{sec:OmegaSmooth} we set up the framework for the analysis on balls of $\mathbb{R}^d$. \\

We underline that \eqref{TelMarkChain} gives the boundary local time (on the boundary local time scale) of the sticky process $\bar{X}$. In the case where $\theta^*=1$ a.s., we obtain the total boundary local time. For example, by recovering \eqref{MarkChain} and \eqref{TelMarkChain}, we can observe that the coupled renewal reward processes
\begin{align*}
\sum_{j=1}^{N_t} e_j, \quad \textrm{and} \quad \sum_{j=1}^{N^L_t} e^L_j
\end{align*} 
both exhibit jump sizes and waiting times of matching magnitudes. That is, the trajectory is given by jumps on a straight line constructed by joining segments according to the jumps of $\gamma^+_t$ and $\bar{\gamma}_t$, for which we indeed have
\begin{align*}
\gamma^+_{r^+_t} = t, \quad \textrm{and} \quad \bar{\gamma}_{\bar{r}_t}=t.
\end{align*} 
From these deterministic paths, it is not possible to discern the jumps of the underlying (elementary) subordinator.\\

Finally, we emphasize that the telegraph process $Y^\alpha$ possesses a linear residual process that determines its characteristic deterministic path, which is in good agreement with a process on the local time scale. On the other hand, the process $Z^\mathtt{tel}$ - as well as the process $Y_L$ - exhibits a non-linear path due to its non-linear residual process (Figure \ref{fig:Ztel} and Figure \ref{fig:ZtelPart}). We refer to $Y^\alpha$ as a fractional telegraph process, as it arises from the generalization of Kac's definition of the telegraph process.

\section{Observing jumps and holding times: the process $Z$}
\label{sec:Z}
Let us consider the problem
\begin{align}
(\sigma/\eta) D^\alpha_t u(t,x)  + D^{2\alpha}_t u(t,x) = \frac{\partial^2}{\partial x^2} \bigg[ u(t,x) - f(x) \mathbf{P}(T^L_1 > t) \bigg], \quad t>0,\, x \in \mathbb{R}, \quad \alpha \in (0,1]
\label{pdeALPHA}
\end{align}
with boundary conditions
\begin{equation}
\left\lbrace
\begin{array}{ll}
\displaystyle \frac{\partial u}{\partial t}(0, x)=g(x), \quad \alpha \in (1/2, 1], \quad g=0 \textrm{ a.e.}, \\
\\
\displaystyle u(0,x)=f(x), \quad \alpha \in (0,1], \quad f \in H^2(\mathbb{R}). 
\end{array}
\right.
\label{pdeBC}
\end{equation}
If $\alpha=1$, then \eqref{pdeALPHA} takes the form
\begin{align}
(\sigma/\eta) \frac{\partial u}{\partial t}(t,x)  + \frac{\partial^2 u}{\partial t^2}(t,x) = \frac{\partial^2}{\partial x^2} \bigg[ u(t,x) - f(x) \mathbf{P}(T_1 > t) \bigg], \quad t>0,\, x \in \mathbb{R}.
\label{pdeUNO}
\end{align}
Recall that 
\begin{align*}
\mathbf{P}(T^L_1> t) = E_\alpha(-(\sigma/\eta) t^\alpha)\quad \textrm{and}  \quad \mathbf{P}(T_1 > t) = e^{-(\sigma/\eta) t}.
\end{align*}
The solution $u(t,x)$, $t \in (0, \infty)$, $x \in \mathbb{R}$ to the problem \eqref{pdeALPHA}-\eqref{pdeBC} belongs to the weighted anisotropic space-time Sobolev space $H^{2\alpha, 1}_\mu((0, \infty) \times \mathbb{R})$, equipped with the norm
\begin{equation}
\|u\|_{H^{2\alpha, 1}_\mu((0, \infty) \times \mathbb{R})} = \left( \|u\|^2_{L^2((0, \infty) \times \mathbb{R}; \mu)} + \|D^{2\alpha}_t u\|^2_{L^2((0, \infty) \times \mathbb{R}; \mu)} + \left\|\frac{\partial u}{\partial x}\right\|^2_{L^2((0, \infty) \times \mathbb{R}; \mu)} \right)^{1/2} < \infty
\end{equation}
with 
\begin{align*}
\| \varphi \|^2_{L^2((0, \infty)\times \mathbb{R}; \mu)} = \int_0^\infty   t^\zeta e^{-\beta t} \| \varphi(t, \dot) \|^2_{L^2(\mathbb{R})} \, dt.
\end{align*}
We provide further analysis below.\\

We introduce the coupled process $Z=\{Z_t\}_{t \geq 0}$ with
\begin{align*}
Z_t = \sum_{j=0}^{N^L_t} \theta_j e_j, \quad t\geq 0. 
\end{align*}
Our coupling is now clearly realized by the fact that
\begin{align*}
T_n = \sum_{j=0}^n e_j \quad \textrm{implies} \quad T^L_n = H_{T_n} \quad \textrm{and} \quad Z_{T^L_n} = \sum_{j=0}^n \theta_j e_j, \quad n \in \mathbb{N}_0.
\end{align*}
The sequence $\{e^L_j\}_{j \in \mathbb{N}_0}$ is the sequence of waiting (or holding) times for $Z$ which is a pure jumping process. Recall that $e_0=0$ and therefore  $Z_0=0$. Thus, $H_{e_0}=0$ and $\{e^L_j\}_{j \in \mathbb{N}}$ is the sequence of i.i.d. random variables such that $e^L_j \stackrel{d}{=} H_{e_1}$ for $j \in \mathbb{N}$. The apparently redundant ratio $\eta/\sigma$ will have a clear reading in the discussion below about sticky boundary conditions.\\

\begin{theorem}
The solution $u$ to the problem \eqref{pdeALPHA}-\eqref{pdeBC} admits the representation 
\begin{align*}
u(t,x)=\mathbf{E}[f(x + Z_t)], \quad t>0,\, x \in \mathbb{R}.
\end{align*}
\label{thm:Zpde}
\end{theorem}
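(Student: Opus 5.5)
We work in Fourier variable $\xi$ and Laplace variable $\lambda$, and show that $u(t,x)=\mathbf{E}[f(x+Z_t)]$ has the same double transform as the unique solution of \eqref{pdeALPHA}-\eqref{pdeBC}. Uniqueness in $H^{2\alpha,1}_\mu$ then follows from the invertibility of the Laplace–Fourier transform for functions with the stated weighted integrability.

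\emph{Step 1: transform of $\mathbf{E}[e^{i\xi Z_t}]$ via renewal decomposition.}
Decompose over $\{N^L_t=n\}=\{T^L_n\le t<T^L_{n+1}\}$ and recall $T^L_n=H_{T_n}$. We have
\begin{align*}
\mathbf{E}[e^{i\xi Z_t}]=\sum_{n\ge0}\mathbf{E}\Big[\prod_{j=1}^n e^{i\xi\theta_j e_j}\,\mathbf{1}_{(H_{T_n}\le t<H_{T_{n+1}})}\Big].
\end{align*}
Take the Laplace transform in $t$. Conditioning on the $e_j$, and using the independent increments of $H$ with $\mathbf{E}e^{-\lambda H_s}=e^{-s\lambda^\alpha}$, each term equals
\begin{align*}
\frac{1-\mathbf{E}e^{-\lambda H_{e_{n+1}}}}{\lambda}\,\prod_{j=1}^n \mathbf{E}\big[e^{i\xi\theta_je_j-\lambda^\alpha e_j}\big].
\end{align*}
The first factor is $\lambda^{\alpha-1}/(\lambda^\alpha+\sigma/\eta)$. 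Averaging over $\theta_j$ and integrating against the exponential density $(\sigma/\eta)e^{-(\sigma/\eta)s}$ gives
\begin{align*}
\mathbf{E}\big[e^{i\xi\theta e-\lambda^\alpha e}\big]=\frac{(\sigma/\eta)(\lambda^\alpha+\sigma/\eta)}{(\lambda^\alpha+\sigma/\eta)^2+\xi^2}.
\end{align*}
Call this $\rho$. Since $|\rho|<1$, the geometric series sums to
\begin{align*}
\tilde{\hat u}(\lambda,\xi)=\hat f(\xi)\,\frac{\lambda^{\alpha-1}}{\lambda^\alpha+\sigma/\eta}\,\frac{1}{1-\rho},
\end{align*}
where we used $\mathbf{E}[f(x+Z_t)]^\wedge(\xi)=\hat f(\xi)\,\mathbf{E}[e^{-i\xi Z_t}]$; the law of $Z_t$ is symmetric, so the sign is immaterial.

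\emph{Step 2: transform of the equation.}
Apply \eqref{LAPder} to \eqref{pdeALPHA}, using $u(0)=f$ and, when $2\alpha>1$, $\partial_tu(0)=0$. The initial-value terms contribute $\hat f\,[(\sigma/\eta)\lambda^{\alpha-1}+\lambda^{2\alpha-1}]$. On the right-hand side, $\mathbf{P}(T^L_1>t)=E_\alpha(-(\sigma/\eta)t^\alpha)$ has Laplace transform $\lambda^{\alpha-1}/(\lambda^\alpha+\sigma/\eta)$. Hence
\begin{align*}
\big(\lambda^{2\alpha}+(\sigma/\eta)\lambda^\alpha+\xi^2\big)\tilde{\hat u}=\hat f\Big[\lambda^{\alpha-1}(\lambda^\alpha+\sigma/\eta)+\xi^2\frac{\lambda^{\alpha-1}}{\lambda^\alpha+\sigma/\eta}\Big]=\hat f\,\lambda^{\alpha-1}\frac{(\lambda^\alpha+\sigma/\eta)^2+\xi^2}{\lambda^\alpha+\sigma/\eta}.
\end{align*}
Write $q=\lambda^\alpha+\sigma/\eta$. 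Then $1-\rho=\big(q^2+\xi^2-(\sigma/\eta)q\big)/(q^2+\xi^2)$, and $q^2-(\sigma/\eta)q=\lambda^\alpha q$. Step 1 therefore gives
\begin{align*}
\tilde{\hat u}=\hat f\lambda^{\alpha-1}\frac{q^2+\xi^2}{q\,(\lambda^\alpha q+\xi^2)},
\end{align*}
which matches the expression just obtained from the equation. So the probabilistic $u$ solves the transformed problem.

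\emph{Step 3: regularity, and where the difficulty lies.}
The main obstacle is justifying that $u$ belongs to $H^{2\alpha,1}_\mu$ and that the transform manipulations are legitimate, i.e.\ that $D^{2\alpha}_tu$ exists in $L^2$ with the initial conditions in the appropriate trace sense.

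Plancherel in $x$ reduces this to bounds on $\hat f(\xi)\phi(t,\xi)$, where $\phi(t,\xi)=\mathbf{E}e^{i\xi Z_t}$ satisfies $|\phi|\le1$, which gives $u,\partial_xu\in L^2_\mu$ because $f\in H^2$. For $D^{2\alpha}_tu$, use the equation itself to write
\begin{align*}
\lambda^{2\alpha}\tilde{\hat u}-\lambda^{2\alpha-1}\hat f=-(\sigma/\eta)\big(\lambda^\alpha\tilde{\hat u}-\lambda^{\alpha-1}\hat f\big)-\xi^2\big(\tilde{\hat u}-\hat f\tilde{E}_\alpha\big),
\end{align*}
where $\tilde E_\alpha$ denotes the Laplace transform of $E_\alpha(-(\sigma/\eta)t^\alpha)$. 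Boundedness of $\xi^2\hat f$ in $L^2$ (since $f\in H^2$) then controls $D^{2\alpha}_tu$, with the weight $t^\zeta e^{-\beta t}$ absorbing the singularity at $t=0$. The condition $\partial_tu(0)=0$ for $\alpha>1/2$ follows because $\mathbf{P}(N^L_t\ge1)=1-E_\alpha(-(\sigma/\eta)t^\alpha)=O(t^\alpha)$, so $u(t)-f=O(t^\alpha)$ in $L^2$.
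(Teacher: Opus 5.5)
Your Steps 1 and 2 are essentially the paper's proof. The paper also splits over $\{T^L_n\le t<T^L_{n+1}\}$ and conditions on $\mathcal{F}^L_n$, using $T^L_n=H_{T_n}$, $T^L_{n+1}-T^L_n\indep T^L_n$ and $\mathbf{E}e^{-\lambda H_s}=e^{-s\lambda^\alpha}$. It then sums the same geometric series in $\rho$, checks the result against the transformed problem, and stops there; regularity of $u$ is only addressed by a remark pointing to the lemmas for $v$. Your coefficient $\sigma/\eta$ in front of $\lambda^\alpha\tilde{\hat u}-\lambda^{\alpha-1}\hat f$ is the correct one; the paper's display has $\eta/\sigma$ there.

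The step that fails is your justification of $\partial_t u(0)=0$ in Step 3. You need this for $\alpha\in(1/2,1]$ to drop the term $\lambda^{2\alpha-2}\partial_tu(0)$ from \eqref{LAPder} in Step 2. From $\mathbf{P}(N^L_t\ge1)=O(t^\alpha)$ you only get $u(t)-f=O(t^\alpha)$. Such a bound allows a nonzero right derivative at $t=0$ when $\alpha=1$, and an infinite one when $\alpha<1$.

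What actually makes the condition hold is the coupling of jump sizes with elapsed time built into $Z$: $Z_t=0$ on $\{N^L_t=0\}$ and $|Z_t|\le T_{N^L_t}\le L_t$. By Minkowski's inequality and $N\indep H$,
\begin{align*}
\|u(t,\cdot)-f\|_{L^2(\mathbb{R})}\le\|f'\|_{L^2(\mathbb{R})}\,\mathbf{E}\big[L_t\big(1-e^{-(\sigma/\eta)L_t}\big)\big]\le(\sigma/\eta)\|f'\|_{L^2(\mathbb{R})}\,\mathbf{E}[L_t^2]=\frac{2(\sigma/\eta)\|f'\|_{L^2(\mathbb{R})}}{\Gamma(1+2\alpha)}\,t^{2\alpha}.
\end{align*}
This is $o(t)$ exactly when $\alpha>1/2$. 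Consistently, your own transform gives $\lambda\tilde{\hat u}-\hat f=-(\sigma/\eta)\xi^2\hat f/(q(\lambda^\alpha q+\xi^2))=O(\lambda^{-3\alpha})$ as $\lambda\to\infty$.

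Separately, your estimate of $D^{2\alpha}_tu$ through the equation leaves the term $(\sigma/\eta)D^\alpha_tu$ uncontrolled. The bound on $\xi^2\hat f$ alone does not close that estimate.
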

\begin{proof}
We evaluate
\begin{align*}
\mathbf{E} \left[ \int_0^\infty e^{-\lambda t} e^{-i\xi Z_t} dt \right] 
= & \mathbf{E} \left[ \int_0^\infty e^{-\lambda t} \sum_{n \geq 0} e^{-i \xi \sum_{j=0}^n \theta_j e_j} \mathbf{1}(T^L_n \leq t < T^L_{n+1}) dt \right]
\end{align*}
by taking into account the $\sigma$-field
\begin{align*}
\mathcal{F}^L_{n} = \sigma\bigg\{\{\theta_j\}_{j \leq n},\, \{e_j\}_{j \leq n},\, \{H_t\}_{0 \leq t \leq T_n}\bigg\}
\end{align*}
and we write
\begin{align*}
\mathbf{E} \left[ \int_0^\infty e^{-\lambda t} e^{-i\xi Z_t} dt \right] 
= & \mathbf{E} \left[ \sum_{n \geq 0} e^{-i \xi \sum_{j=0}^n \theta_j e_j} \int_{T^L_n}^{T^L_{n+1}} e^{-\lambda t} dt \right]\\
= & \sum_{n \geq 0} \mathbf{E} \left[ \mathbf{E} \left[  e^{-i \xi \sum_{j=0}^n \theta_j e_j} \int_{T^L_n}^{T^L_{n+1}} e^{-\lambda t} dt \Bigg| \mathcal{F}^L_n \right]  \right]\\
= & \sum_{n \geq 0} \mathbf{E} \left[ \mathbf{E} \left[ e^{-i \xi \sum_{j=0}^n \theta_j e_j - \lambda T^L_n}\Bigg| \mathcal{F}^L_n \right]  \frac{\left(1 - e^{-\lambda H_{e_{n+1}}} \right)}{\lambda}  \right]\\
= & \sum_{n \geq 0} \mathbf{E} \left[ \frac{\left(1 - e^{-\lambda H_{e_{n+1}}} \right)}{\lambda} \prod_{j=0}^n e^{-i \xi \theta_j e_j - \lambda H_{e_j}}   \right].
\end{align*}
Indeed, $T^L_n = H_{T_n}$ and 
\begin{align*}
T^L_{n+1} - T^L_{n} \indep T^L_n \quad \textrm{with} \quad T^L_{n+1} - T^L_{n} \stackrel{d}{=} H_{T_{n+1} - T_n} = H_{e_{n+1}}. 
\end{align*}
In particular \eqref{AllIndep} holds and we get
\begin{align*}
\mathbf{E} \left[ \int_0^\infty e^{-\lambda t} e^{-i\xi Z_t} dt \right] 
= & \sum_{n \geq 0} \mathbf{E} \left[ \frac{\left(1 - e^{-\lambda^\alpha e_{n+1}} \right)}{\lambda} \prod_{j=0}^n e^{-i \xi \theta_j e_j - \lambda^\alpha e_j}   \right]\\
= &  \frac{\lambda^{\alpha-1}}{\lambda^\alpha + (\sigma/\eta)} \sum_{n \geq 0} \prod_{j=0}^n \mathbf{E} \left[ e^{-i \xi \theta_j e_j - \lambda^\alpha e_j}   \right]\\
= & \left[ \textrm{recall that $e_0=0$} \right]\\
= &  \frac{\lambda^{\alpha-1}}{\lambda^\alpha + (\sigma/\eta)} \sum_{n \geq 0} \prod_{j=1}^n \mathbf{E} \left[ \frac{\sigma/\eta}{(\sigma/\eta + \lambda^\alpha - i \xi \theta_j)}   \right]\\
= & \frac{\lambda^{\alpha-1}}{\lambda^\alpha + (\sigma/\eta)} \sum_{n \geq 0} \left( \frac{\sigma/\eta (\sigma/\eta + \lambda^\alpha)}{(\sigma/\eta + \lambda^\alpha)^2 + \xi^2 }\right)^n \\
= & \frac{\lambda^{\alpha-1}}{\lambda^\alpha + (\sigma/\eta)} \frac{(\sigma/\eta + \lambda^\alpha)^2 + \xi^2}{\lambda^\alpha (\sigma/\eta + \lambda^\alpha) + \xi^2}.
\end{align*}
Thus, for 
\begin{align*}
\tilde{\hat{u}}(\lambda, \xi) := \int_0^\infty e^{-\lambda t} \int_{\mathbb{R}} e^{-i\xi x} \mathbf{E}[f(x + Z_t)] dx\, dt = \hat{f}(\xi) \mathbf{E} \left[ \int_0^\infty e^{-\lambda t} e^{-i\xi Z_t} dt \right], \quad \lambda>0,\; \xi \in \mathbb{R},
\end{align*}
where
\begin{align*}
\hat{f}(\xi) = \int_\mathbb{R} e^{-i \xi x} f(x) dx,
\end{align*}
we get
\begin{align*}
\tilde{\hat{u}}(\lambda, \xi) =  \frac{\lambda^{\alpha - 1}(\sigma/\eta + \lambda^\alpha) + \xi^2}{\lambda^\alpha (\sigma/\eta + \lambda^\alpha) + \xi^2} \hat{f}(\xi)  + \frac{\lambda^{\alpha-1}}{\lambda^\alpha + (\sigma/\eta)} \frac{\xi^2}{\lambda^\alpha (\sigma/\eta + \lambda^\alpha) + \xi^2} \hat{f}(\xi)
\end{align*}
and
\begin{align*}
(\eta/\sigma) \big( \lambda^\alpha \tilde{\hat{u}}(\lambda, \xi)  - \lambda^{\alpha-1}\hat{f}(\xi) \big) + \big(\lambda^{2\alpha} \tilde{\hat{u}}(\lambda, \xi)  - \lambda^{2\alpha -1} \hat{f}(\xi)\big) = - \xi^2 \bigg( \tilde{\hat{u}}(\lambda, \xi)  - \frac{\lambda^{\alpha-1}}{\lambda^\alpha + \sigma/\eta} \hat{f}(\xi) \bigg) 
\end{align*}
is the double transform associated to \eqref{pdeALPHA}-\eqref{pdeBC}.

\end{proof}

\section{Connecting the jump states: the process $Z^\mathtt{tel}$}
\label{sec:Ztel}
We consider the problem
\begin{align}
(\sigma/\eta) D^\alpha_t v(t,x)  + D^{2\alpha}_t v(t,x) = \frac{\partial^2 v}{\partial x^2} (t,x), \quad t>0,\, x \in \mathbb{R}, \quad \alpha \in (0,1]
\label{pdeTelALFHA}
\end{align}
with boundary conditions
\begin{equation}
\left\lbrace
\begin{array}{ll}
\displaystyle \frac{\partial v}{\partial t}(0, x)=g(x), \quad \alpha \in (1/2, 1], \quad g=0 \textrm{ a.e.}, \\
\\
\displaystyle v(0,x)=f(x), \quad \alpha \in (0,1], \quad f \in H^2(\mathbb{R}). 
\end{array}
\right.
\label{pdeTelBC}
\end{equation}
The solution $v(t, x)$, $t \in (0, \infty)$, $x \in \mathbb{R}$ to the homogeneous fractional telegraph problem \eqref{pdeTelALFHA}-\eqref{pdeTelBC} belongs to the weighted anisotropic space-time Sobolev space $H^{2\alpha, 1}_\mu((0, \infty) \times \mathbb{R})$, equipped with the norm
\begin{equation}
\|v\|_{H^{2\alpha, 1}_\mu((0, \infty) \times \mathbb{R})} = \left( \|v\|^2_{L^2((0, \infty) \times \mathbb{R}; \mu)} + \|D^{2\alpha}_t v\|^2_{L^2((0, \infty) \times \mathbb{R}; \mu)} + \left\|\frac{\partial v}{\partial x}\right\|^2_{L^2((0, \infty) \times \mathbb{R}; \mu)} \right)^{1/2} < \infty
\end{equation}
with 
\begin{align*}
\| \varphi \|^2_{L^2((0, \infty)\times \mathbb{R}; \mu)} = \int_0^\infty   t^\zeta e^{-\beta t} \| \varphi(t, \dot) \|^2_{L^2(\mathbb{R})} \, dt.
\end{align*}
Our aim is to find a stochastic solution for the fractional telegraph equation. Although the analytical solution is known to be unique, it admits different probabilistic representations.\\

Let us introduce, for $t\geq 0$, the process
\begin{align*}
Z_t^\mathtt{tel} =  \sum_{j=1}^{N^L_t} \theta_j e_j + \theta_{1 + N^L_t}\big( L_t - T_{N^L_t}\big) = Z_t + \theta_{1 + N^L_t} \big( L_t - T_{N^L_t}\big).
\end{align*}
Thus, the process $Z^\mathtt{tel} = \{Z^\mathtt{tel}_t\}_{t\geq 0}$ has continuous paths with $Z^\mathtt{tel}_0=0$ and, for $t>0$,   
\begin{align*}
Z^\mathtt{tel}_t = \left\lbrace
\begin{array}{ll}
\displaystyle \theta_1 L_t, & 0 \leq t < T^L_1 := H_{e_1},\\
\\
\displaystyle \theta_1 e_1 + \theta_2 \big(L_t - T_1 \big), & T^L_1 \leq t < T^L_2 := H_{e_1 + e_2}, \\
\\
\displaystyle  .... \\
\\
\displaystyle \sum_{j=1}^n \theta_j e_j + \theta_{n+1} \big(L_t - T_n \big), & T^L_n \leq t < T^L_{n+1} := H_{T_{n+1}}, \quad n \in \mathbb{N}_0 .
\end{array}
\right . 
\end{align*}
The residual process exhibits a non-decreasing path given by $L$ (see Figure \ref{fig:Ztel}). Observe that $Z^\mathtt{tel}$ slightly differs from $Y_{L}$ obtained from the representation \eqref{seriesTEL} via time-change with $L$. For example, 
\begin{align*}
|c \, Z^\mathtt{tel}_t| \leq c\,T^L_{N^L_t} +  c\,| L_t - T^L_{N^L_t} | = c \,L_t, \quad t\geq 0.
\end{align*}
and
\begin{align*}
|Y_{L_t} - Y_{L_0}| \leq c\,T^L_{N^L_t} +  c\,| L_t - T^L_{N^L_t} | = c \, L_t, \quad t\geq 0.
\end{align*}
While the trajectories of the processes may exhibit different geometric features - due to the action of $\theta_{n+1}$ in place of $(-1)^{n}$ in the residual process - we now show that such a difference does not emerge in a weak sense (i.e., at the distributional level). Indeed, we obtain that, for a function $f$ sufficiently regular,
\begin{align*}
\mathbf{E}[f(Y_{L_t}) \, | Y_{L_0}=x] = \mathbf{E}[ f(x + c Z^\mathtt{tel}_t)], \quad t >0,\; x \in \mathbb{R}.
\end{align*}

We first underline that $H$ exhibits strictly increasing paths. For the epochs $\{T_j\}_{j \in \mathbb{N}}$, given a path of $H$ we identify the sequence $\{H_{T_j}\}_{j \in \mathbb{N}}$ and the sequence $\{H_{T_j} - H_{T_{j-1}}\}_{j \in \mathbb{N}}$ of independent increments. Recall that $e^L_j \stackrel{d}{=} H_{T_j} - H_{T_{j-1}}$ for all $j \in \mathbb{N}$ with $T_{0}=0$ and $T^L_j = H_{T_j} \stackrel{d}{=} e^L_1 + \cdots + e^L_j$. From the path of $H$ we obtain information on the path of $L_t = \inf\{s \geq 0\,:\, H_s >t\}$. In particular, at the renewal epoch $T_j$ we have $H_{T_j}:=T^L_j$ for which $L_{T^L_j}= T_j$ and the independent increment $H_{T_j} - H_{T_{j-1}}$ is mapped onto the increment $L_{H_{T_j}} - L_{H_{T_{j-1}}} = T_j - T_{j-1}=: e_j$ for all $j \in \mathbb{N}$. The increments $L_{T^L_j} - L_{T^L_{j-1}}$ are independent due to the Poisson sampling on the path of $H$ which provides a Fractional Poisson sampling on the path of $L$. Recall that for the fractional Poisson process $N^L$ we have independent Mittag-Leffler waiting times, that is $\{e^L_j\}_{j \in \mathbb{N}}$. The path of $L_t$ can therefore be written as
\begin{align}
L_t = \sum_{j=1}^{N^L_t} e_j + L_t - \sum_{j=1}^{N^L_t} e_j = \sum_{j=1}^{N^L_t} e_j + \big(L_t - T_{N^L_t} \big)
\label{repL}
\end{align}
and we obtain the residual process $\big(L_t - T_{N^L_t}\big)$. The fractional telegraph process is then obtained as a random perturbation of \eqref{repL}.

\begin{theorem}
The solution $v$ to the problems \eqref{pdeTelALFHA}-\eqref{pdeTelBC} admits the representation
\begin{align*}
v(t,x) = \mathbf{E}[f(x + Z_t^\mathtt{tel})], \quad t >0,\; x \in \mathbb{R}.
\end{align*} 
\label{thm:Ztelpde}
\end{theorem}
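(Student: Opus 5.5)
We follow the same Fourier–Laplace route used in the proof of Theorem~\ref{thm:Zpde}. The plan is to compute $\mathbf{E}\big[\int_0^\infty e^{-\lambda t} e^{-i\xi Z^\mathtt{tel}_t}dt\big]$ and show that it equals
\begin{align*}
\frac{\lambda^{\alpha-1}(\lambda^\alpha+\sigma/\eta)}{\lambda^\alpha(\sigma/\eta+\lambda^\alpha)+\xi^2},
\end{align*}
which is the double transform of the solution recalled in \eqref{mathcE}. We then verify directly, using \eqref{LAPder} with $v(0,\cdot)=f$ and $\partial_t v(0,\cdot)=0$ (the latter only needed for $2\alpha>1$), that
\begin{align*}
(\sigma/\eta)\big(\lambda^\alpha\tilde{\hat v}-\lambda^{\alpha-1}\hat f\big)+\big(\lambda^{2\alpha}\tilde{\hat v}-\lambda^{2\alpha-1}\hat f\big)=-\xi^2\tilde{\hat v}.
\end{align*}
Uniqueness of the transform in the weighted space $H^{2\alpha,1}_\mu$ then identifies $v(t,x)=\mathbf{E}[f(x+Z^\mathtt{tel}_t)]$.

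To compute the transform, we decompose $[0,\infty)=\bigcup_n[T^L_n,T^L_{n+1})$. On the $n$-th interval we have $Z^\mathtt{tel}_t=\sum_{j\le n}\theta_je_j+\theta_{n+1}(L_t-T_n)$. We condition on $\mathcal{F}^L_n$ as before and use the strong Markov property of $H$ at $T_n$: the shifted subordinator $H'_s=H_{T_n+s}-H_{T_n}$ is independent of $\mathcal{F}^L_n$, and for $t\in[T^L_n,T^L_{n+1})$ we have $L_t-T_n=L'_{t-T^L_n}$, where $L'$ is the inverse of $H'$. This factorizes the $n$-th term as
\begin{align*}
\mathbf{E}\Big[\prod_{j=1}^n e^{-i\xi\theta_je_j-\lambda H_{e_j}}\Big]\cdot \mathbf{E}\Big[\int_0^{H'_{e_{n+1}}}e^{-\lambda s}e^{-i\xi\theta_{n+1}L'_s}ds\Big].
\end{align*}
The first factor equals $\big(\tfrac{(\sigma/\eta)(\sigma/\eta+\lambda^\alpha)}{(\sigma/\eta+\lambda^\alpha)^2+\xi^2}\big)^n$, exactly as in Theorem~\ref{thm:Zpde}.

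The main step is the second factor. Since $e_{n+1}$ is independent of $H'$ and $H'_{e}>s \iff L'_s<e$, we can integrate out $e_{n+1}$ first:
\begin{align*}
\mathbf{E}\Big[\int_0^\infty e^{-\lambda s}e^{-i\xi\theta L'_s}e^{-(\sigma/\eta)L'_s}ds\Big].
\end{align*}
Next we use the known Laplace transform in $s$ of the law of $L'_s$, namely $\int_0^\infty e^{-\lambda s}\mathbf{P}(L'_s\in dy)\,ds=\lambda^{\alpha-1}e^{-\lambda^\alpha y}\,dy$. This gives
\begin{align*}
\mathbf{E}_\theta\Big[\frac{\lambda^{\alpha-1}}{\lambda^\alpha+\sigma/\eta+i\xi\theta}\Big]=\frac{\lambda^{\alpha-1}(\lambda^\alpha+\sigma/\eta)}{(\lambda^\alpha+\sigma/\eta)^2+\xi^2}.
\end{align*}
The delicate point is justifying this conditioning and the identification $L_t-T_n=L'_{t-T^L_n}$ pathwise. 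It holds because $H$ is strictly increasing and $T_n$ is independent of $H$, so the strong Markov property at the independent time $T_n$ applies. We also need Fubini, which is fine since the integrands are bounded by $e^{-\lambda t}$.

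Finally, summing the geometric series gives
\begin{align*}
\frac{\lambda^{\alpha-1}(\lambda^\alpha+\sigma/\eta)}{(\lambda^\alpha+\sigma/\eta)^2+\xi^2}\cdot\frac{(\lambda^\alpha+\sigma/\eta)^2+\xi^2}{\lambda^\alpha(\lambda^\alpha+\sigma/\eta)+\xi^2},
\end{align*}
which is the required kernel. The algebraic check of the transformed equation is routine. Membership of $v$ in $H^{2\alpha,1}_\mu$ follows from $|\hat v(t,\xi)|\le|\hat f(\xi)|$ with $f\in H^2$, and from the transformed equation controlling $D^{2\alpha}_t\hat v$ by $(1+\xi^2)|\hat f|$ after choosing the weight $t^\zeta e^{-\beta t}$ suitably.
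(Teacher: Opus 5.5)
Your proposal is correct and follows the paper's proof: the same Fourier--Laplace computation over the renewal intervals $[T^L_n,T^L_{n+1})$, with conditioning on $\mathcal{F}^L_n$, the same geometric series, and identification with the transformed equation via \eqref{LAPder}. The only local difference is in the residual block. You integrate out $e_{n+1}$ first through $\{H'_{e}>s\}=\{L'_s<e\}$ and then use the potential density $\lambda^{\alpha-1}e^{-\lambda^\alpha y}$ of the inverse subordinator, whereas the paper substitutes $t=H_s$ in a Lebesgue--Stieltjes integral and replaces $de^{-\lambda H_s}$ by $de^{-\lambda^\alpha s}$ in expectation; both give $\lambda^{\alpha-1}(\lambda^\alpha+\sigma/\eta)/\big((\lambda^\alpha+\sigma/\eta)^2+\xi^2\big)$, and your version justifies that step more transparently.
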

\begin{proof}
We follow the proof of the Theorem \ref{thm:Zpde}. For $\lambda>0$, $\xi \in \mathbb{R}$ we write
\begin{align*}
\mathbf{E} \left[ \int_0^\infty e^{-\lambda t} e^{-i\xi Z^\mathtt{tel}_t} dt \right] 
= & \mathbf{E} \left[ \int_0^\infty e^{-\lambda t} e^{-i\xi Z_t - i \xi \theta_{N^L_t + 1} ( L_t - T_{N^L_t})} dt \right]\\
= & \sum_{n \geq 0} \mathbf{E} \left[  e^{-i\xi \sum_{j=1}^{n} \theta_j e_j + i \xi \theta_{n + 1} T_n} \int_{T^L_n}^{T^L_{n+1}} e^{ - \lambda t - i \xi \theta_{n + 1} L_t } dt \right]\\
= & \sum_{n \geq 0}  \mathbf{E} \left[  \mathbf{E} \left[  e^{-i\xi \sum_{j=1}^{n} \theta_j e_j + i \xi \theta_{n + 1} T_n}  \int_{T^L_n}^{T^L_{n+1}} e^{ - \lambda t - i \xi \theta_{n + 1} L_t } dt \bigg| \, \mathcal{F}^L_n \right] \right]
\end{align*}
with (see for example \cite{EJP} for the Lebesgue-Stieltjes integral with respect to $H$)
\begin{align*}
& \mathbf{E}\left[ \mathbf{E} \left[   e^{-i\xi \sum_{j=1}^{n} \theta_j e_j + i \xi \theta_{n + 1} T_n}\int_{T^L_n}^{T^L_{n+1}} e^{ - \lambda t - i \xi \theta_{n + 1} L_t } dt \bigg| \, \mathcal{F}^L_n \right] \right] \\
= & - \frac{1}{\lambda} \mathbf{E}\left[ \mathbf{E} \left[  e^{-i\xi \sum_{j=1}^{n} \theta_j e_j + i \xi \theta_{n + 1} T_n} \int_{T^L_n}^{T^L_{n+1}} e^{ - i \xi \theta_{n + 1} t } de^{-\lambda t} \bigg| \, \mathcal{F}^L_n \right] \right]\\
= & - \frac{1}{\lambda} \mathbf{E}\left[ \mathbf{E} \left[  e^{-i\xi \sum_{j=1}^{n} \theta_j e_j + i \xi \theta_{n + 1} T_n} \int_{T_n}^{T_{n+1}} e^{- i \xi \theta_{n + 1} t } de^{-\lambda H_t} \bigg| \, \mathcal{F}^L_n \right] \right]\\
= & - \frac{1}{\lambda} \mathbf{E}\left[ \mathbf{E} \left[  e^{-i\xi \sum_{j=1}^{n} \theta_j e_j + i \xi \theta_{n + 1} T_n} \int_{T_n}^{T_{n+1}} e^{- i \xi \theta_{n + 1} t } de^{-\lambda^\alpha t} \bigg| \, \mathcal{F}^L_n \right] \right]\\
= & \lambda^{\alpha-1} \mathbf{E}\left[ \mathbf{E} \left[ e^{-i\xi \sum_{j=1}^{n} \theta_j e_j + i \xi \theta_{n + 1} T_n} \int_{T_n}^{T_{n+1}} e^{ - \lambda^\alpha t- i \xi \theta_{n + 1} t } dt \bigg| \, \mathcal{F}^L_n \right] \right]\\
= & \lambda^{\alpha - 1} \mathbf{E}\left[ \mathbf{E}\left[ e^{-i\xi \sum_{j=1}^{n} \theta_j e_j + i \xi \theta_{n + 1} T_n}\frac{\big(e^{-(\lambda^\alpha + i\xi \theta_{n+1})T_{n}} - e^{-(\lambda^\alpha + i\xi \theta_{n+1}) T_{n+1}} \big)}{\lambda^\alpha + i \xi \theta_{n+1}}  \bigg| \, \mathcal{F}^L_n \right] \right]\\
= & \lambda^{\alpha - 1}  \mathbf{E}\left[ \mathbf{E}\left[ e^{-i\xi \sum_{j=1}^{n} \theta_j e_j -\lambda^\alpha T_n} \frac{\big(1 - e^{-(\lambda^\alpha + i\xi \theta_{n+1})e_{n+1}}\big)}{\lambda^\alpha + i \xi \theta_{n+1}}  \bigg| \, \mathcal{F}^L_n \right] \right]\\
= & \lambda^{\alpha - 1} \mathbf{E}\left[ \frac{\big(1 - e^{-(\lambda^\alpha + i\xi \theta_{n+1})e_{n+1}}\big)}{\lambda^\alpha + i \xi \theta_{n+1}} \right] \mathbf{E}\left[ e^{-i\xi \sum_{j=1}^{n} \theta_j e_j -\lambda^\alpha T_n}  \right].
\end{align*}
Thus, the double transform takes the form
\begin{align*}
\mathbf{E} \left[ \int_0^\infty e^{-\lambda t} e^{-i\xi Z^\mathtt{tel}_t} dt \right] 
= & \sum_{n \geq 0} \mathbf{E}\left[ e^{-i\xi \sum_{j=1}^{n} \theta_j e_j -\lambda^\alpha T_n} \right] \lambda^{\alpha -1} \mathbf{E}\left[ \frac{\big(1 - e^{-(\lambda^\alpha + i\xi \theta_{n+1})e_{n+1}}\big)}{\lambda^\alpha + i \xi \theta_{n+1}}\right]\\
= & \sum_{n \geq 0} \left( \prod_{j=1}^n \mathbf{E}\left[ e^{-i\xi \theta_j e_j -\lambda^\alpha e_j} \right]\right) \lambda^{\alpha -1} \mathbf{E}\left[ \frac{\big(1 - e^{-(\lambda^\alpha + i\xi \theta_{n+1})e_{n+1}}\big)}{\lambda^\alpha + i \xi \theta_{n+1}}\right]
\end{align*}
where the distribution of $\theta_j$, $j \in \mathbb{N}$ plays a crucial role. The last expectation gives 
\begin{align*}
\mathbf{E}\left[ \frac{\big(1 - e^{-(\lambda^\alpha + i\xi \theta_{n+1})e_{n+1}}\big)}{\lambda^\alpha + i \xi \theta_{n+1}}\right] = \frac{(\sigma/\eta + \lambda^\alpha)}{(\sigma/\eta + \lambda^\alpha)^2 + \xi^2}.
\end{align*}
Now recall that
\begin{align*}
\mathbf{E} \left[ \int_0^\infty e^{-\lambda t} e^{-i\xi Z_t} dt \right] = \frac{\lambda^{\alpha-1}}{\lambda^\alpha + (\sigma/\eta)} \mathcal{Z}(\lambda, \xi) 
\end{align*}
where
\begin{align*}
\mathcal{Z}(\lambda, \xi) :=\frac{(\sigma/\eta + \lambda^\alpha)^2 + \xi^2}{\lambda^\alpha (\sigma/\eta + \lambda^\alpha) + \xi^2}, \quad \lambda>0,\; \xi \in \mathbb{R}.
\end{align*}
Thus, we get
\begin{align}
\mathbf{E} \left[\int_0^\infty e^{-\lambda t} e^{-i\xi Z^\mathtt{tel}_t} dt \right]= \mathcal{Z}(\lambda, \xi) \frac{\lambda^{\alpha -1} (\sigma/\eta + \lambda^\alpha)}{(\sigma/\eta + \lambda^\alpha)^2 + \xi^2} = \frac{\lambda^{\alpha -1} (\sigma/\eta + \lambda^\alpha)}{\lambda^\alpha (\sigma/\eta + \lambda^\alpha) + \xi^2}, \quad \lambda >0,\, \xi \in \mathbb{R}.
\label{ZteldoubleTransform}
\end{align}
It follows that
\begin{align*}
\tilde{\hat{v}}(\lambda, \xi) := \int_0^\infty e^{-\lambda t} \int_\mathbb{R} e^{-i\xi x} \mathbf{E}[f(x + Z_t^\mathtt{tel})]dx dt = \hat{f}(\xi) \mathbf{E}\left[ \int_0^\infty e^{-\lambda t} e^{-i \xi Z_t^\mathtt{tel}} dt \right]
\end{align*}
and
\begin{align*}
(\sigma/\eta) \big( \lambda^\alpha \tilde{\hat{v}}(\lambda, \xi) - \lambda^{\alpha-1} \hat{f}(\xi) \big) + \big(\lambda^{2\alpha} \tilde{\hat{v}}(\lambda, \xi) - \lambda^{2\alpha - 1} \hat{f}(\xi) \big) = - \xi^2 \tilde{\hat{v}}(\lambda, \xi).
\end{align*}
From \eqref{LAPder} we conclude the proof.
\end{proof}

Now we define
	\begin{equation}
		H_t^{(n)}:=H_{T_n+t}-H_{T_n},\quad t\ge0,
		\label{shiftH}
	\end{equation}
	and let
	\begin{equation}
		L_t^{(n)}:=\inf\{s\ge0:H_s^{(n)}>t\},\quad t\ge0.
		\label{shiftL}
	\end{equation}
	Notice that $e^L_{n+1} = H_{e_{n+1}}^{(n)}$, $n \in \mathbb{N}$ and the previous setting still holds. 

\begin{corollary}
We have that
\begin{align*}
Z_t^{\mathtt{tel}} = \sum_{j=0}^n\theta_j e_j + \theta_{n+1}L_{t-T_n^L}^{(n)}, \quad T_n^L\le t<T_{n+1}^L.
		\label{defZtel}
\end{align*}
\end{corollary}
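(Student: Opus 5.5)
The claim is a pathwise identity, so I will argue on a fixed path. By definition, for $T^L_n\le t<T^L_{n+1}$ we have $N^L_t=n$ and
\begin{align*}
Z^\mathtt{tel}_t=\sum_{j=1}^n\theta_j e_j+\theta_{n+1}\big(L_t-T_n\big).
\end{align*}
Since $e_0=0$, the sum may start at $j=0$. It then suffices to show
\begin{align*}
L_t - T_n = L^{(n)}_{t-T^L_n}, \qquad T^L_n\le t<T^L_{n+1}.
\end{align*}

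To prove this, recall $T^L_n=H_{T_n}$ and use the definitions \eqref{shiftH}--\eqref{shiftL}. For $s\ge0$, the event $H^{(n)}_s>t-T^L_n$ is the same as $H_{T_n+s}>t$. Hence
\begin{align*}
L^{(n)}_{t-T^L_n}=\inf\{s\ge0: H_{T_n+s}>t\}.
\end{align*}
Because $H$ is non-decreasing and $H_{T_n}=T^L_n\le t$, every $r$ with $H_r>t$ satisfies $r\ge T_n$ (indeed, $r<T_n$ would give $H_r\le H_{T_n}\le t$). Therefore
\begin{align*}
\inf\{r\ge0:H_r>t\}=T_n+\inf\{s\ge0:H_{T_n+s}>t\},
\end{align*}
that is, $L_t=T_n+L^{(n)}_{t-T^L_n}$. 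Substituting this into the expression for $Z^\mathtt{tel}_t$ gives the claim.

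The only delicate point is the boundary case $t=T^L_n$. There we need $L_{T^L_n}=T_n$, which the paper already records, and which holds because $H$ is strictly increasing, so $H_{T_n+s}>H_{T_n}$ for every $s>0$. With the right-continuous convention for $L$, the identity also holds on the half-open interval $[T^L_n, T^L_{n+1})$. No distributional argument is needed: the statement is pathwise. The remark that $e^L_{n+1}=H^{(n)}_{e_{n+1}}$ and that $H^{(n)}$ is again a stable subordinator by the strong Markov property is only useful afterwards, to read $L^{(n)}$ as an inverse stable subordinator independent of $\mathcal{F}^L_n$.
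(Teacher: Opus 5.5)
Your proposal is correct and follows the paper's route. The paper reduces the corollary to the shift identity $L^{(n)}_{t-T^L_n}=L_t-T_n$ on $[T^L_n,T^L_{n+1})$ and justifies it in one line by the strict monotonicity of $H$ (together with the independence of $T_n$ and $H$). You carry out the infimum computation explicitly and correctly observe that monotonicity plus $H_{T_n}=T^L_n\le t$ already gives the identity, with strict increase entering only through $L_{T^L_n}=T_n$ at the left endpoint. The independence the paper invokes is what makes the identification $N^L_t=n$ on that interval hold almost surely, because the $T_j$ are then a.s.\ not jump times of $H$. Your ``by definition'' passes over this point, but it does not affect the conclusion.
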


\begin{proof}
Since an $\alpha$-stable subordinator $H$ is almost surely strictly increasing (and $T_n$ is independent of $H$), for $T_n^L\le t<T_{n+1}^L$ one has almost surely
	\begin{equation}
		L_{t-T_n^L}^{(n)}=L_t-T_n
		\label{Ln}
	\end{equation}
 which defines the residual process associated with $Z^\mathtt{tel}$. Once \eqref{Ln} is established, we can also identify the process $Z$. This representation holds pathwise for $t \geq 0$. 
\end{proof}

Recall $\mathcal{E}_\alpha$ introduced in \eqref{mathcE}. Since $(-\sigma/\eta + \sqrt{(\sigma/\eta)^2-c^2 \xi^2}) \leq 0$, the function $t \to \mathcal{E}_\alpha(t, \xi)$ is bounded. We recall that, for $\alpha \in (0,1]$, $E_\alpha(-z)$ with $z\geq 0$ is a continuous monotone function with 
\begin{align*}
\lim_{z\to \infty} E_\alpha(-z)=0, \quad \lim_{z \to 0}E_\alpha(-z)=1.
\end{align*}
Thus, we obtain $\mathcal{E}_\alpha(t, \xi) \to 1$ as $t \to 0$ for $c^2\xi^2 \neq (\eta/\sigma)^2$, for $\alpha \in (0,1]$. A further investigation reveals, for $\xi \in \mathbb{R}$, that $\mathcal{E}_\alpha(t, \xi) \to 1$ as $t \to 0$ for all $\alpha \in (0,1]$. \\

We can now establish the well-posedness and continuity of the solution in $H^1(\mathbb{R})$. Recall that $f \in H^1(\mathbb{R})$ and $v(t,x)$ is the solution obtained via the inverse Fourier transform of $\hat{v}(t,\xi) = \hat{f}(\xi)\mathcal{E}_\alpha(t,\xi)$.

\begin{lemma}
The map $t \mapsto v(t, \cdot)$ is continuous in $H^1(\mathbb{R})$ up to the initial time $t=0$, 
\begin{align*}
\lim_{t \to 0^+} \|v(t, \cdot) - f\|_{H^1(\mathbb{R})} = 0.
\end{align*}
\label{lemma:contZERO}
\end{lemma}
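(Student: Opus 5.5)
We work entirely on the Fourier side and use Plancherel. Since $\hat v(t,\xi)=\hat f(\xi)\,\mathcal{E}_\alpha(t,\xi)$, we have
\begin{align*}
\|v(t,\cdot)-f\|^2_{H^1(\mathbb{R})} = \frac{1}{2\pi}\int_{\mathbb{R}} (1+\xi^2)\,|\hat f(\xi)|^2\,\big|\mathcal{E}_\alpha(t,\xi)-1\big|^2\, d\xi .
\end{align*}
The integrand is dominated by an integrable function and tends to zero pointwise as $t\to 0^+$. Dominated convergence then gives the claim. So the proof reduces to two facts about $\mathcal{E}_\alpha$: a uniform bound $\sup_{t>0,\,\xi\in\mathbb{R}}|\mathcal{E}_\alpha(t,\xi)|\le C$, and pointwise convergence $\mathcal{E}_\alpha(t,\xi)\to 1$ for every fixed $\xi$. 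Given these, $(1+\xi^2)|\hat f|^2(1+C)^2$ is an integrable majorant because $f\in H^1(\mathbb{R})$.

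For the uniform bound, the cleanest route is probabilistic rather than through the explicit $E^1,E^2$ formula. By Theorem \ref{thm:Ztelpde} and \eqref{ZteldoubleTransform}, $\mathcal{E}_\alpha(t,\xi)=\mathbf{E}[e^{-i\xi Z^\mathtt{tel}_t}]$, since both have the same Laplace transform in $t$ and both are continuous in $t$. Hence $|\mathcal{E}_\alpha(t,\xi)|\le 1$ trivially, and we may take $C=1$. This avoids the delicate regime $c^2\xi^2>(\sigma/\eta)^2$, where the square roots become imaginary and the coefficients $\frac12(1\pm \frac{\sigma/\eta}{\sqrt{\cdots}})$ are complex. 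As a fallback, one can argue analytically: in that regime $E^1+E^2$ is real and combines into a bounded expression, and near the branch point $c^2\xi^2=(\sigma/\eta)^2$ the singular coefficients cancel in the sum.

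For the pointwise limit, the probabilistic representation again suffices. Because $|Z^\mathtt{tel}_t|\le L_t$ and $L_t\to 0$ a.s. as $t\to0^+$ (as $H$ is strictly increasing with $H_0=0$), we get $e^{-i\xi Z^\mathtt{tel}_t}\to 1$ a.s. Bounded convergence then yields $\mathcal{E}_\alpha(t,\xi)\to 1$ for every $\xi$, including the exceptional value $c^2\xi^2=(\sigma/\eta)^2$ flagged before the lemma. A quantitative bound is also available: $|1-\mathcal{E}_\alpha(t,\xi)|\le |\xi|\,\mathbf{E}[L_t]=|\xi|\,t^\alpha/\Gamma(1+\alpha)$, though dominated convergence does not need it.

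The main obstacle is justifying the identification of $\mathcal{E}_\alpha(t,\cdot)$ with the characteristic function of $Z^\mathtt{tel}_t$ for every $t$, not merely for a.e. $t$. This requires uniqueness of the Laplace transform together with continuity of $t\mapsto \mathbf{E}[e^{-i\xi Z^\mathtt{tel}_t}]$. The continuity follows from the continuity of the paths of $Z^\mathtt{tel}$ and bounded convergence, and continuity of $t\mapsto\mathcal{E}_\alpha(t,\xi)$ follows from continuity of the Mittag-Leffler function. With this identification in place, the rest is the dominated convergence argument above.
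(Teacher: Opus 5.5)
Your argument is correct. The outer frame, Plancherel followed by dominated convergence with majorant $(1+C)^2(1+\xi^2)|\hat f(\xi)|^2$, is the same as in the paper; the two proofs differ in how they obtain the two facts about $\mathcal{E}_\alpha$.

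The paper works analytically with the explicit Mittag-Leffler form $E^1+E^2$. Pointwise convergence comes from $E_\alpha(-z)\to 1$ as $z\to 0$, with the branch point left to a ``further investigation''. The bound $|\mathcal{E}_\alpha|\le C$, uniform in $t$ and $\xi$, is asserted from analyticity rather than proved. The paper does not treat the regime where the square roots become imaginary, nor the blow-up of the coefficients near the branch point.

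You instead identify $\mathcal{E}_\alpha(t,\cdot)$ with the characteristic function of $Z^\mathtt{tel}_t$, using \eqref{ZteldoubleTransform}, uniqueness of the Laplace transform, and continuity in $t$. This gives $C=1$ for all $t>0$ and $\xi\in\mathbb{R}$ at once. It also gives pointwise convergence, with the explicit rate $|\xi|\,t^\alpha/\Gamma(1+\alpha)$, from the pathwise bound $|Z^\mathtt{tel}_t|\le L_t$, with no case distinctions.

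Because your route uses only the Laplace transform, it does not depend on the constants in the closed form. This matters: as printed, the exponents $-\sigma/\eta\pm\sqrt{(\sigma/\eta)^2-\xi^2}$ are roots $q$ of $q^2+2(\sigma/\eta)q+\xi^2$. They are not roots of the denominator $q^2+(\sigma/\eta)q+\xi^2$ of the stated transform, so the paper's formula carries a factor-two slip that your argument never touches.

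The cost is that the lemma now rests on Theorem~\ref{thm:Ztelpde} instead of being a statement about the kernel alone. The logic also runs opposite to the paper's subsequent remark. The paper deduces mean-square continuity of $Z^\mathtt{tel}$ at $t=0$ from the lemma. You deduce the lemma from $|Z^\mathtt{tel}_t|\le L_t$, which yields that mean-square continuity directly as well.
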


\begin{proof}
By applying Plancherel's identity, 
\begin{equation}
\|v(t, \cdot) - f\|_{H^2(\mathbb{R})}^2 = \int_{\mathbb{R}} (1 + \xi^2) \left| \mathcal{E}_\alpha(t, \xi) - 1 \right|^2 | \hat{f}(\xi) |^2 d\xi.
\label{eq:integral}
\end{equation}
To pass the limit $t \to 0^+$ inside the integral, we apply the Lebesgue's dominated convergence theorem. 

As mentioned above (pointwise convergence),  for any fixed $\xi \in \mathbb{R}$, we have $\lim_{t \to 0^+} \mathcal{E}_\alpha(t, \xi) = 1$ and consequently,
\begin{align*}
\lim_{t \to 0^+} (1 + \xi^2) \left| \mathcal{E}_\alpha(t, \xi) - 1 \right|^2 | \hat{f}(\xi) |^2 = 0 \quad \forall \xi \in \mathbb{R}
\end{align*}
Since the Mittag-Leffler functions composing the kernel $\mathcal{E}_\alpha$ are analytic and uniformly bounded for bounded times $t \in [0, T]$, there exists a constant $C > 0$ independent of $t$ and $\xi$ such that $\left|\mathcal{E}_\alpha(t, \xi)\right| \le C$. This implies that
\begin{align*}
    \left| \mathcal{E}_\alpha(t, \xi) - 1 \right|^2 \le (C + 1)^2.
\end{align*}
Thus, the integrand in \eqref{eq:integral} can therefore be upper-bounded by
\begin{align*}
   (1 + \xi^2) \left| \mathcal{E}_\alpha(t, \xi) - 1 \right|^2 | \hat{f}(\xi) |^2 \le (C+1)^2 (1 + \xi^2) | \hat{f}(\xi) |^2
\end{align*}
with
\begin{align*}
\int_\mathbb{R} (1 + \xi^2) | \hat{f}(\xi) |^2 dx  = \| f\|^2_{H^1(\mathbb{R})} < \infty.
\end{align*}
Finally we obtain
\begin{align*}
\lim_{t \to 0^+} \|v(t, \cdot) - f\|_{H^2(\mathbb{R})}^2 = \int_{\mathbb{R}} (1 + \xi^2) \left[ \lim_{t \to 0^+} \left| \mathcal{E}_\alpha(t, \xi) - 1 \right|^2 \right] | \hat{f}(\xi) |^2 d\xi = 0
\end{align*}
This concludes the proof of strong continuity in $H^1(\mathbb{R})$.
\end{proof}

We say that $v \in H^{2\alpha, 1}_\mu((0, \infty) \times \mathbb{R})$ meaning that
\begin{align*}
\|v\|_{H^{2\alpha, 1}_\mu}^2 = \int_0^\infty t^\zeta e^{-\beta t} \left( \|v(t, \cdot)\|^2_{L^2(\mathbb{R})} + \|D^{2\alpha}_t v(t, \cdot)\|^2_{L^2(\mathbb{R})} + \left\|\frac{\partial v}{\partial x}(t, \cdot)\right\|_2^2 \right) dt < \infty.
\end{align*}

\begin{lemma}
The solution $v$ belongs to the weighted anisotropic Sobolev space $H^{2\alpha, 1}_\mu((0, \infty) \times \mathbb{R})$ if and only if 
\begin{align*}
\alpha \in (0,1], \quad \beta > 0 \quad \text{and} \quad \zeta > - 1.
\end{align*}
\end{lemma}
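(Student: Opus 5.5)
We will work entirely on the Fourier side and use Plancherel in $x$, so that each of the three terms in $\|v\|^2_{H^{2\alpha,1}_\mu}$ becomes a weighted integral in $(t,\xi)$. By \eqref{mathcE}, $\hat v(t,\xi)=\hat f(\xi)\mathcal{E}_\alpha(t,\xi)$ and $\widehat{\partial_x v}(t,\xi)=i\xi\hat f(\xi)\mathcal{E}_\alpha(t,\xi)$. The equation \eqref{pdeTelALFHA} gives, on the Fourier side,
\[
D^{2\alpha}_t\hat v = -\xi^2\hat v-(\sigma/\eta)D^\alpha_t\hat v .
\]
Hence
\[
\|v\|^2_{H^{2\alpha,1}_\mu}=\int_0^\infty t^\zeta e^{-\beta t}\int_{\mathbb R}|\hat f(\xi)|^2\Big((1+\xi^2)|\mathcal{E}_\alpha(t,\xi)|^2+|D^{2\alpha}_t\mathcal{E}_\alpha(t,\xi)|^2\Big)\,d\xi\,dt .
\]
The plan is to bound the inner quantities uniformly in $t$ for the sufficiency part, and then let the time weight decide integrability.

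For the sufficiency direction, first use the bound $|\mathcal{E}_\alpha(t,\xi)|\le C$ already exploited in Lemma~\ref{lemma:contZERO}. It can also be obtained cleanly from $\mathcal{E}_\alpha(t,\xi)=\mathbf{E}[e^{-i\xi Z^{\mathtt{tel}}_t}]$ via \eqref{ZteldoubleTransform}, which gives $|\mathcal{E}_\alpha|\le1$ directly. The first two terms are then dominated by $\|f\|^2_{H^1}\int_0^\infty t^\zeta e^{-\beta t}dt$, and this integral is finite exactly when $\zeta>-1$ and $\beta>0$. For the fractional derivative term, write $D^{2\alpha}_t\mathcal{E}_\alpha$ using the two Mittag-Leffler summands $E^1,E^2$ and the identity $D^{\gamma}_tE_\alpha(\mu t^\alpha)=\mu E_\alpha(\mu t^\alpha)$ for $\gamma=\alpha$, with the corresponding two-parameter formula for $\gamma=2\alpha$. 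Since $\mathcal{E}_\alpha$ solves the scalar equation, $D^{2\alpha}_t\mathcal{E}_\alpha=-\xi^2\mathcal{E}_\alpha-(\sigma/\eta)D^\alpha_t\mathcal{E}_\alpha$, and it suffices to control $D^\alpha_t\mathcal{E}_\alpha$. The bound to aim for is
\[
|D^\alpha_t\mathcal{E}_\alpha(t,\xi)|\le C(1+|\xi|) .
\]
Near $t=0$ one may instead need a bound of the form $C t^{-\alpha}$ times a bounded function, which would require $\zeta>2\alpha-1$ rather than $\zeta>-1$.

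This $D^{2\alpha}_t$ term is the step I expect to be the main obstacle, because of how it behaves as $t\to0^+$ and as $|\xi|\to\infty$. I would try to handle the time singularity first through the Laplace representation: from \eqref{LAPder} and \eqref{ZteldoubleTransform},
\[
\int_0^\infty e^{-\lambda t}D^{2\alpha}_t\mathcal{E}_\alpha\,dt=-\xi^2\,\frac{\lambda^{\alpha-1}}{\lambda^\alpha(\sigma/\eta+\lambda^\alpha)+\xi^2}-(\sigma/\eta)\,\frac{\lambda^{\alpha-1}\xi^2}{\lambda^\alpha(\sigma/\eta+\lambda^\alpha)+\xi^2}\cdot\frac{1}{\lambda^\alpha},
\]
up to algebra. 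A Tauberian argument as $\lambda\to\infty$ then shows $D^{2\alpha}_t\mathcal{E}_\alpha=O(\xi^2 t^{\alpha-1}\cdot\text{bounded})$ near $0$. This is square-integrable against $t^\zeta$ provided $\zeta+2\alpha-2>-1$. The factor $\xi^2$ is what forces $f\in H^2(\mathbb R)$, as assumed in \eqref{pdeTelBC}, rather than only $H^1$.

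For the necessity direction, argue by contradiction on each parameter. If $\beta\le0$, note that $\mathcal{E}_\alpha(t,\xi)\to$ a nonzero limit for small $|\xi|$. The $E^1$ term with exponent $(-\sigma/\eta+\sqrt{(\sigma/\eta)^2-\xi^2})t^\alpha\approx-\frac{\eta}{2\sigma}\xi^2t^\alpha$ decays only slowly, so $\int|\hat f|^2|\mathcal{E}_\alpha|^2d\xi$ decays polynomially and the integral with weight $e^{-\beta t}$, $\beta\le 0$, diverges. If $\zeta\le-1$, Lemma~\ref{lemma:contZERO} gives $\|v(t)\|_{L^2}\to\|f\|_{L^2}>0$, so $\int_0 t^\zeta\,dt=\infty$. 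The constraint $\alpha\in(0,1]$ comes from the standing setting, since for $\alpha>1$ the kernel is not bounded in the relevant way. I would check whether the precise threshold for $\zeta$ from the $D^{2\alpha}_t$ term is really $-1$ or something $\alpha$-dependent, and adjust the argument (for instance using $D^{2\alpha}_t v(0)=0$-type compatibility) if needed.
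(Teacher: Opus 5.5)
Your sufficiency skeleton matches the paper's: Plancherel, a uniform bound on $\mathcal{E}_\alpha$, then $D^{2\alpha}_t\hat v=-\xi^2\hat v-(\sigma/\eta)D^\alpha_t\hat v$ to reduce everything to $D^\alpha_t\mathcal{E}_\alpha$. Getting $|\mathcal{E}_\alpha|\le 1$ from $\mathcal{E}_\alpha(t,\xi)=\mathbf{E}[e^{-i\xi Z^{\mathtt{tel}}_t}]$ is cleaner than the paper's justification. But the one step with real content is never carried out. You name the target $|D^\alpha_t\mathcal{E}_\alpha|\le C(1+|\xi|)$ and then fall back on an analysis that is wrong, because your Laplace transform is incorrect. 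From \eqref{LAPder} and \eqref{ZteldoubleTransform} (for $\alpha>1/2$ the $k=1$ term drops because $\partial_t\mathcal{E}_\alpha(0,\xi)=0$),
\[
\int_0^\infty e^{-\lambda t}D^{2\alpha}_t\mathcal{E}_\alpha(t,\xi)\,dt=\lambda^{2\alpha}\,\frac{\lambda^{\alpha-1}(\sigma/\eta+\lambda^\alpha)}{\lambda^\alpha(\sigma/\eta+\lambda^\alpha)+\xi^2}-\lambda^{2\alpha-1}=-\frac{\xi^2\lambda^{2\alpha-1}}{\lambda^\alpha(\sigma/\eta+\lambda^\alpha)+\xi^2}\sim-\frac{\xi^2}{\lambda}\quad(\lambda\to\infty),
\]
so $D^{2\alpha}_t\mathcal{E}_\alpha(0^+,\xi)=-\xi^2$. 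There is no $t^{\alpha-1}$ or $t^{-\alpha}$ singularity. The thresholds $\zeta>1-2\alpha$ or $\zeta>2\alpha-1$ you contemplate would contradict the statement you are trying to prove.

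The missing idea is a cancellation. Put $R=\sqrt{(\sigma/\eta)^2-\xi^2}$ and $q_{1,2}=-\sigma/\eta\pm R$. The weights of $E^1,E^2$ satisfy $\tfrac12(1+\tfrac{\sigma/\eta}{R})q_1+\tfrac12(1-\tfrac{\sigma/\eta}{R})q_2=0$. Hence $\mathcal{E}_\alpha=1-\xi^2t^{2\alpha}/\Gamma(1+2\alpha)+\cdots$, and the $t^{-\alpha}$ terms you feared cancel. Applying $D^\alpha_tE_\alpha(qt^\alpha)=qE_\alpha(qt^\alpha)$ termwise gives
\[
D^\alpha_t\mathcal{E}_\alpha(t,\xi)=-\xi^2\,\frac{E_\alpha(q_1t^\alpha)-E_\alpha(q_2t^\alpha)}{q_1-q_2},
\]
which is $\xi^2$ times a bounded divided difference. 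This is the paper's Step 3. It yields $|D^{2\alpha}_t\hat v(t,\xi)|\le C\xi^2|\hat f(\xi)|$ uniformly in $t$, so the weight enters only through $\int_0^\infty t^\zeta e^{-\beta t}\,dt$.

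Your target bound is in fact true in the sharper form $|D^\alpha_t\mathcal{E}_\alpha|\le|\xi|$. Comparing Laplace transforms gives $\mathcal{E}_\alpha(t,\xi)=\mathbf{E}[\mathcal{E}_1(L_t,\xi)]$ and $D^\alpha_t\mathcal{E}_\alpha(t,\xi)=\mathbf{E}[\partial_r\mathcal{E}_1(r,\xi)|_{r=L_t}]$. The energy $|\partial_r\mathcal{E}_1|^2+\xi^2|\mathcal{E}_1|^2$ of the damped oscillator $\partial_r^2y+(\sigma/\eta)\partial_ry+\xi^2y=0$ is non-increasing from its initial value $\xi^2$, which gives the bound.

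On necessity: the paper's proof only establishes the ``if'' direction, and your argument for $\beta\le0$ breaks at $\beta=0$. Polynomial decay of $\|v(t,\cdot)\|^2_{L^2}$ can be integrable against $t^\zeta$. For a Gaussian $f$, use $E_\alpha(-x)\le C/(1+x)$ and $q_1\approx-\xi^2\eta/(2\sigma)$ near $\xi=0$. This gives $\|v(t,\cdot)\|^2_{L^2}\lesssim t^{-\alpha/2}$ for large $t$, and the derivative terms decay faster. So $v\in H^{2\alpha,1}_\mu$ for $\beta=0$ and $\zeta\in(-1,\alpha/2-1)$. Necessity of $\beta>0$ can therefore only mean ``for every $f\in H^2(\mathbb{R})$'', and proving it would need an $f$ with $|\hat f|$ suitably singular at $\xi=0$. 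Likewise, $\alpha\in(0,1]$ is a standing hypothesis rather than something you derive. Only your $\zeta\le-1$ step (via Lemma~\ref{lemma:contZERO}, for $f\neq0$) is sound as written.
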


\begin{proof}
To prove that $v \in H^{2\alpha, 1}_\mu((0, \infty) \times \mathbb{R})$, we must verify that
\begin{align*}
\|v\|^2_{H^{2\alpha, 1}_\mu} < \infty.
\end{align*}
By virtue of Plancherel's theorem, the norm of any function $\varphi(t,x)$ in the weighted space $L^2((0, \infty)\times \mathbb{R}; \mu)$ can be equivalently evaluated as
\begin{align*}
\|\varphi\|^2_{H^{2\alpha, 1}_\mu} = \int_0^\infty t^\zeta e^{-\beta t} \|\hat{\varphi}(t, \dot)\|^2_{L^2(\mathbb{R})} \, dt
\end{align*}
where $\hat{\varphi}(t, \xi)$ is the Fourier transform of $\varphi(t,x$.\\

({\it Step 1}) Recall $|\mathcal{E}_\alpha(t, \xi)| \le C$ as observed above. Then, we have
\begin{align*}
\|\hat{v}(t, \cdot)\|^2_{L^2(\mathbb{R}_\xi)} \le C^2 \|\hat{f}\|^2_{L^2(\mathbb{R})} = C^2 \|f\|^2_{L^2(\mathbb{R})}
\end{align*}
and
\begin{align*}
\|v\|^2_{H^{2\alpha, 1}_\mu} \le C^2 \|f\|^2_{L^2(\mathbb{R})} \int_0^\infty t^\zeta e^{-\beta t} \, dt = C^2 \|f\|^2_{L^2(\mathbb{R})} \frac{\Gamma(\zeta + 1)}{\beta^{\zeta + 1}}
\end{align*}
Since $f \in H^1(\mathbb{R}) \subset L^2(\mathbb{R})$, $\beta > 0$, and $\zeta > -1$ (ensuring the integrability of the singularity at $t \to 0^+$ via the Gamma function $\Gamma$), the first term is strictly finite.\\

({\it Step 2}) Now observe that
\begin{align*}
|\xi \hat{v}(t, \xi)| = |\xi \mathcal{E}_\alpha(t, \xi) \hat{f}(\xi)| \le C |\xi \hat{f}(\xi)|
\end{align*}
By applying Plancherel's identity 
\begin{align*}
\left\|\frac{\partial v}{\partial x}(t, \cdot)\right\|^2_{L^2(\mathbb{R})} = \int_{\mathbb{R}} \xi^2 |\hat{v}(t, \xi)|^2 \, d\xi \le C^2 \int_{\mathbb{R}} \xi^2 |\hat{f}(\xi)|^2 \, d\xi = C^2 \left\|\frac{\partial f}{\partial x}\right\|^2_{L^2(\mathbb{R})}
\end{align*}
and by integrating over the time domain we get
\begin{align*}
\left\|\frac{\partial v}{\partial x}\right\|^2_{H^{2\alpha, 1}_\mu} \le C^2 \left\|\frac{\partial f}{\partial x}\right\|^2_{L^2(\mathbb{R})} \int_0^\infty t^\zeta e^{-\beta t} \, dt = C^2 \left\|\frac{\partial f}{\partial x}\right\|^2_{L^2(\mathbb{R})} \frac{\Gamma(\zeta + 1)}{\beta^{\zeta + 1}}
\end{align*}
This expression is finite since $f \in H^1(\mathbb{R})$.\\

({\it Step 3})
Observe that
\begin{align*}
D^{2\alpha}_t \hat{v}(t, \xi) = -\xi^2 \hat{v}(t, \xi) - (\sigma/\eta) D^\alpha_t \hat{v}(t, \xi)
\end{align*}
under the algebraic inequality $|a + b|^2 \le 2|a|^2 + 2|b|^2$ leads to
\begin{align*}
\|D^{2\alpha}_t \hat{v}(t, \cdot)\|^2_{L^2(\mathbb{R})} \le 2 \|\xi^2 \hat{v}(t, \cdot)\|^2_{L^2(\mathbb{R})} + 2(\sigma/\eta)^2 \|D^\alpha_t \hat{v}(t, \cdot)\|^2_{L^2(\mathbb{R})}
\end{align*}
where
\begin{align*}
\|\xi^2 \hat{v}(t, \cdot)\|^2_{L^2(\mathbb{R})} = \int_\mathbb{R} |\xi^2 \mathcal{E}_\alpha(t,\xi) \hat{f}(\xi) |^2 d\xi \leq C^2 \left\|\frac{\partial^2 f}{\partial x^2}\right\|^2_{L^2(\mathbb{R})}.
\end{align*}
Observe that $D^\alpha_t E_\alpha(- q t^\alpha)= - q E_\alpha(-q t^\alpha)$ implies 
\begin{align*}
D^\alpha_t \mathcal{E}_{\alpha}(t,\xi) = \frac{\xi^2}{2 \sqrt{(\sigma/\eta)^2 - \xi^2}} \left( E_\alpha(q_1 t^\alpha) - E_\alpha(q_2 t^\alpha) \right)
\end{align*}
where $q_1 = -\sigma/\eta + \sqrt{(\sigma/\eta)^2-\xi^2}$ and $q_2 = -\sigma/\eta - \sqrt{(\sigma/\eta)^2-\xi^2}$, that is
\begin{align*}
D^\alpha_t \mathcal{E}_{\alpha}(t,\xi) = \frac{\xi^2}{2} \frac{E_\alpha(q_1 t^\alpha) - E_\alpha(q_2 t^\alpha)}{q_1 -q_2}.
\end{align*}
Since
\begin{align*}
(\sigma/\eta) \frac{E_\alpha(q_1t^\alpha) - E_\alpha(q_2t^\alpha)}{q_1 - q_2} = \mathcal{E}_\alpha(t, \xi) - \frac{E_\alpha(q_1 t^\alpha) + E_\alpha(q_2 t^\alpha)}{2}
\end{align*}
we conclude that
\begin{align*}
(\sigma/\eta) D^\alpha_t \mathcal{E}_{\alpha}(t,\xi) \leq \frac{\xi^2}{2} \left[ \mathcal{E}_\alpha(t, \xi) - \frac{E_\alpha(q_1 t^\alpha) + E_\alpha(q_2 t^\alpha)}{2} \right]
\end{align*}
where the addends in the right-hand side are bounded. Thus, there exist $\tilde{C}>0$ such that 
\begin{align*}
(\sigma/\eta)  D^\alpha_t \mathcal{E}_{\alpha}(t,\xi) \leq \tilde{C}\, \xi^2
\end{align*}
and
\begin{align*}
(\sigma/\eta)^2 \|D^\alpha_t \hat{v}(t, \cdot)\|^2_{L^2(\mathbb{R})} \leq \tilde{C} \int_\mathbb{R} \xi^4 |\hat{f}(\xi)|^2 d\xi = \tilde{C} \left\|\frac{\partial^2 f}{\partial x^2}\right\|^2_{L^2(\mathbb{R})}.
\end{align*}
Finally, we obtain 
\begin{align*}
\|D^{2\alpha}_t \hat{v}(t, \cdot)\|^2_{L^2(\mathbb{R})} \le 2 \big(C^2 + \tilde{C} \big) \left\|\frac{\partial^2 f}{\partial x^2}\right\|^2_{L^2(\mathbb{R})} < \infty
\end{align*}
due to $f \in H^2(\mathbb{R})$.\\

({\it Conclusion})
Combining the results of Steps 1, 2, and 3, all terms comprising the $H^{2\alpha, 1}_\mu((0, \infty) \times \mathbb{R})$ norm are finite, completing the proof.
\end{proof}

In conclusion, for $\alpha \in (0,1]$,
\begin{equation*}
v\in C((0,\infty );H^{1}(\mathbb{R}))
\end{equation*}
and possesses the necessary time-regularity for the fractional derivatives to be well-defined. The vanishing initial velocity ensures the continuity of the fractional framework up to the boundary $t=0$. \\

Lemma \ref{lemma:contZERO} implies that the underlying process is mean-square continuous at the initial instant, 
\begin{align*}
\lim _{t\rightarrow 0^{+}}\mathbf{E}[|Z^\mathtt{tel}_{t}-Z^\mathtt{tel}_{0}|^{2}]=0,
\end{align*}
which ensures that the process does not exhibit instantaneous macroscopic jumps. For the underlying fractional telegraph process, this regular behaviour ensures that the mean squared displacement, given by $\mathbf{E}[(Z^\mathtt{tel}_t)^2]$, is well-defined at the origin. Concerning the process $Z$, where the dynamics are resolved into waiting times (durations) and spatial jumps (amplitudes), this initial continuity carries a precise microscopic meaning. It implies that the waiting time distribution for the first event is well-behaved near the origin, preventing an unphysical accumulation of infinite jumps immediately after $t=0$. Consequently, the particle spends a physically consistent, non-vanishing duration in its initial state before memory effects and macroscopic transport features take over.

\begin{remark}
It is worth noting that we do not need to separately address the time-continuity at $t = 0$ for the solution $u(t, x)$ of Section \ref{sec:Z}. Indeed, one can follow exactly the same lines as established here for $v(t, x)$.
\end{remark}

\section{Sticky Brownian motions}
\label{sec:Sticky}
\subsection{Overview of the boundary behaviour}
In \cite{EJP} a wide class of sticky processes is defined via time change. In particular, a process is termed sticky if it spends a non-zero amount of Lebesgue time on the boundary. The present paper originated from our attempt to understand the fundamental dynamics of sticky processes. Our aim is simply to provide a first insight into this deep connection.\\

{\it (The case $\alpha=1$)} We consider $X=\{X_t\}_{t\geq 0}$ on the bounded domain $\Omega$ of $\mathbb{R}^d$, $d\geq 1$ and the reflected Brownian motion $X^+=\{X^+_t\}_{t \geq 0}$ for which
\begin{align*}
\mathbf{E}_x[f(X_t)] = \mathbf{E}_x[f(X^+ \circ V^{-1}_t)]
\end{align*}
with $V_t = t + (\eta/\sigma) \gamma^+_t$ where $\gamma^+_t$ is the local time of $X^+$ for which
\begin{align*}
\int_0^t \mathbf{1}_{\Omega}(X^+_s) d\gamma^+_s = 0.
\end{align*}
We equip the compact Euclidean space $\bar{\Omega} = \Omega \cup \partial \Omega$ with a finite Borel measure
\begin{align}
m(dx) = \mathbf{1}_\Omega\, dx + (\eta/\sigma)\, \mathbf{1}_{\partial \Omega} \, m_\partial(dx)
\label{mMeasure}
\end{align}
defined as the sum of the $d$-dimensional Lebesgue measure supported on the interior and the $(d-1)$-dimensional Hausdorff measure supported on the boundary. This implies that 
\begin{align*}
\gamma_t = \int_0^t \mathbf{1}_{\partial \Omega}(X_s)ds
\end{align*}
is not trivially zero. The sticky process $X$ has generator $(G, D(G))$ with $G=\Delta$ and
\begin{align*}
D(G) = \{\varphi, \Delta \varphi \in C(\bar{\Omega}), \, \varphi \in H^1(\Omega)\,:\, \eta (\Delta \varphi) |_{\partial \Omega} = - \sigma \partial_{\bf n} \varphi \}
\end{align*}
where $\partial_{\bf n}\varphi$ is the outer normal derivative with respect to $m$ introduced in \eqref{mMeasure}. We recall that $u|_{\partial \Omega}$ is the trace function (continuous operator) from $H^1(\Omega)$ into $L^2(\partial \Omega, m_\partial)$, such that $u \mapsto Tu=u|_{\partial \Omega}$ for $u \in H^1(\Omega)\cap C(\bar{\Omega})$. The reader can consult \cite{EJP} and the references therein for details. In the $1$-dimensional case $m_\partial(dx)$ is the Dirac measure $\delta_0(dx)$ if $\Omega=(0, \infty)$ or $\delta_a(dx) + \delta_b(dx)$ if $\Omega=(a,b)$. 

To fix ideas, we consider the $\epsilon$-neighbourhood 
\begin{align*}
\Lambda_\epsilon := \{x \in \bar{\Omega}\,:\, \operatorname{dist}(x, \partial \Omega) < \epsilon\}, \quad \epsilon>0
\end{align*}
and the exit time
\begin{align*}
\tau^+_\epsilon := \inf\{ t\,:\, X^+_t \notin \Lambda_\epsilon  \}, \quad \epsilon>0.
\end{align*}
For $X^+_0 \in \partial \Omega$, it follows that      
\begin{align*}
V \circ \tau^+_\epsilon = \tau^+_\epsilon + (\eta/\sigma) \gamma^+ \circ \tau^+_\epsilon =: \tau^+_\epsilon + e
\end{align*}
where, up to time $\tau^+_\epsilon$, 
\begin{itemize}
\item[i)] $\tau^+_\epsilon$ is the occupation time of $X^+$ on $\Lambda_\epsilon$;
\item[ii)] $\tau^+_\epsilon + (\eta/\sigma) \gamma^+ \circ \tau^+_\epsilon$ is the occupation time of $X$ on $\Lambda_\epsilon$.
\end{itemize}
In particular (this plays a crucial role),
\begin{align*}
\textrm{$e:=(\eta/\sigma) \gamma^+ \circ \tau^+_\epsilon$ is the extra time accumulated on $\partial \Omega$ by $X$ on $\Lambda_\epsilon$ up to time $\tau^+_\epsilon$.}
\end{align*}
We can introduce (\cite[Definition 6.3]{EJP}) the sequence $\{e_j\}_j = \{e_j,\, j \in \mathbb{N}\}$ of random variables as the sequence of sticky holding times for $X$. For a given $j \in \mathbb{N}$, $e_j$ denotes the time the process $X$ (started from $\partial \Omega$) spends on the boundary $\partial \Omega$ during the $j$-th visit to the $\epsilon$-neighbourhood $\Lambda_\epsilon$. Moreover, as a consequence of global regularity (e.g., when $\Omega$ is a ball) the sequence of sticky holding times $\{e_j\}_j$ consists of i.i.d. exponential random variables (see \cite[Remark 6.4 and Remark 6.5]{EJP}). Indeed, maintaining the assumption $e_0=0$, $e_j$ equals in law $(\eta/\sigma) \gamma^+_{\tau^+_\epsilon}$ for the visit $j \in \mathbb{N}$ to $\Lambda_\epsilon$ and
\begin{align*}
\textrm{$(\eta/\sigma) \gamma^+_\epsilon$ is an exponential r.v. with parameter depending on $\eta$, $\sigma$ and $\epsilon>0$.}
\end{align*}
The asymptotic regime as $\epsilon\to 0$ holds a pivotal role in the context of continuous-time random walks. In the local time scale - obtained by discarding the excursions into $\Omega$ (i.e. neglecting $\tau^+_\epsilon$) - the process
\begin{align}
\sum_{j=0}^{N_t} e_j, \quad t\geq 0
\label{equivXZ}
\end{align}
can be considered (see \cite[formula (6.10)]{EJP}) for the jumps of the local time $(\eta/\sigma) \gamma^+_t$ and therefore, to define the time accumulated by $X$ on the boundary. Moreover, if we consider in addition the residual process
\begin{align}
(\eta/\sigma) \big( \gamma^+_t - \gamma^+_{T_{N_t}} \big), \quad t\geq 0
\label{resiXZ}
\end{align}
then we can write $(\eta/\sigma)\gamma^+_t$ for $t\geq 0$ as the sum of \eqref{equivXZ} and \eqref{resiXZ} as discussed in \cite[formula (6.10)]{EJP}. Observe that, for all $t\geq 0$,
\begin{align*}
(\eta/\sigma) \big( \gamma^+_t - \gamma^+_{T_{N_t}} \big) = (\eta/\sigma) \gamma^+_{t -T_{N_t}} \quad \textrm{under $\mathbf{P}_x$ with $x=X^+_{T_{N_t}}$.}
\end{align*}
Under the assumption
\begin{align*}
\mathbf{P}(\theta_j = 1) = 1 \quad \textrm{for all $j \in \mathbb{N}_0$}
\end{align*}
for the process $Z$ with $\alpha=1$, we get exactly \eqref{equivXZ}. Thus, for  
the (Feller-Wentzell) sticky process $X$, the process $Z^\mathtt{tel}$ with $\alpha=1$ represents the local time (on the local time scale) accumulated on the boundary, which can be viewed as a piecewise linear trajectory composed of random segments with matching time and space displacements (i.e., the same waits and jumps). Even when successive jumps occur along the same line the underlying discrete structure of jumps and waiting times remains active. This hidden renewal mechanism guarantees that the accumulation of local time on the boundary is correctly recorded at the microscopic level. From the authors' viewpoint, this intimate relation between sticky local times and finite velocity models (for $\epsilon>0$) sheds new light on the semi-Markov models and the boundary trace processes (driven by Dirichlet-to-Neumann operators) in case of non-local dynamic boundary conditions. We study below this connection in the fractional order case  $\alpha \in (0,1]$.\\

{\it (The case $\alpha \in (0,1]$)} The class of sticky Brownian motion introduced in \cite{EJP} consists of processes associated to the non-local dynamic boundary value problems
\begin{align}
\alpha \in (0,1], \qquad \left\lbrace
\begin{array}{l}
\displaystyle \frac{\partial \varphi}{\partial t}(t,x) = \Delta \varphi(t,x), \quad t>0, \; x \in \Omega, \\
\\
\displaystyle \eta D^\alpha_t T \varphi(t,x) = - \sigma \partial_{\bf n} \varphi(t,x) - c \, \varphi(t,x), \quad t>0, \; x \in \partial \Omega,\\
\\
\displaystyle \varphi(0,x) = f(x), \quad x \in \bar{\Omega}.
\end{array}
\right .
\label{probXbar}
\end{align}
We relate the problem \eqref{probXbar} with $\bar{X}=\{\bar{X}_t\}_{t \geq 0}$ and the solution $\varphi(t,x) := \mathbf{E}_x[f(\bar{X}_t)]$ to \eqref{probXbar} can be written as 
\begin{align*}
\varphi(t,x) = \mathbf{E}_x[f(X^{el}_{\bar{V}^{-1}_t)}] = \mathbf{E}_x [f(X^+_{\bar{V}^{-1}_t}) \, M^+ \circ \bar{V}^{-1}_t], \quad \bar{X}_0=x
\end{align*}
where $\bar{V}^{-1}_t := \inf\{ s \, :\, \bar{V}_s > t\}$ is the inverse to the process
\begin{align}
\label{Vbar}
\bar{V}_t := t +  H_{(\eta/\sigma) \gamma^+_t}
\end{align}
and $M^+_t = e^{-(c/\sigma) \gamma^+_t}$ is the multiplicative functional associated to the elastic condition ($c=0$ in the present paper) and $\{X^{el}_t\}_{t\geq 0}$ is the elastic Brownian motion obtained via elastic killing from $X^+$. As we can see 
\begin{align*}
\bar{V}_{\tau^+_\epsilon} = \tau^+_\epsilon + H_{(\eta/\sigma)\gamma^+_{\tau^+_\epsilon}}
\end{align*}
where 
\begin{align*}
e:=H_{(\eta/\sigma)\gamma^+_{\tau^+_\epsilon}}
\end{align*}
is the extra time accumulated on the boundary by $\bar{X}$ during a useful (the process hits $\partial \Omega$) visit to $\Lambda_\epsilon$. This suggests that, for $e^L_j := T^L_j - T^L_{j-1} = H_{T_j} - H_{T_{j-1}}$, 
\begin{align}
\sum_{j=1}^{N^L_t} e^L_j, \quad t\geq 0
\label{equivHXZ}
\end{align}
(recall that $e_0=0$) is the associated pure jump process describing the local time accumulated on the boundary by $\bar{X}$ up to time $t$. This corresponds to the case of a totally asymmetric dynamics (see formula~\eqref{equivXZ} for $\alpha=1$). An interesting reading is given by the following formulas
\begin{align}
\sum_{j=1}^{N_t}  e_j +  \left( t - T_{N_t} \right) =  t
\label{tXpiu}
\end{align}
and
\begin{align}
\sum_{j=1}^{N^L_t} e^L_j + (t - T^L_{N^L_t}) =t
\label{tXbar}
\end{align}
where we used the fact that
\begin{align*}
\sum_{j=1}^{N^L_t} \big(H_{T_j} - H_{T_{j-1}}\big) = H_{T_1} + H_{T_2} - H_{T_1} + \cdots = H_{T_{N^L_t}} =: T^L_{N^L_t}.
\end{align*}
For example, we can say that $\bar{X}_{t}$ has spent a time $\sum_{j=1}^{N^L_t} e^L_j$ on the boundaries $\{a,b\}$ up to time $t$, and is bouncing at one of the two boundaries for a time $(t - T^L_{N^L_t})$, undergoing slow-downs or trappings. Formula \eqref{tXpiu} has a special meaning for $X_t$ based on the identity $\gamma^+_{r^+_t} = t$ as we discuss below. Analogously, formula \eqref{tXbar} can be associated with $\bar{\gamma}_{\bar{r}_t} = t$.\\

We are not interested in the asymptotic analysis ($\epsilon\to 0$); therefore, we do not refer to $\Lambda_\epsilon$ below, but rather consider a fixed neighbourhood. For simplicity, this neighbourhood is assumed to be symmetric, as introducing asymmetries would result in asymmetric dynamics.

\subsection{$\Omega$ is the interval $[a,b]$: a detailed analysis}

Consider the reflected Brownian motion $X^+$ restricted to the compact interval $[a, b]$ with reflecting boundaries at both endpoints. We focus on the specific geometric configuration where the interval is split exactly in half by its midpoint $(a+b)/2$. Let $\gamma^+_t(a)$ and $\gamma^+_t(b)$ denote the continuous local times accumulated at the boundaries $a$ and $b$ up to time $t$, respectively. Let $\gamma^+_t = \gamma^+_t(a) + \gamma^+_t(b)$ represent the total boundary local time of the process. Finally, we introduce
\begin{align*}
r^+_t := \inf\{s \geq 0\,:\, \gamma^+_s > t\}.
\end{align*}
Since $\gamma^+$ is continuous and non-decreasing, then $\gamma^+_{r^+_t}=t$. Observe that, for $t> 0$ we have
\begin{align*}
\mathbf{P}_0(X^+_t \leq (a+b)/2) = \mathbf{P}_0(X^+_t \geq (a+b)/2) = 1/2.
\end{align*}
We now introduce the sticky Brownian motion. Recall that the stopped local time is distributed as an exponential random variable (see for example \cite[Theorem 7.7]{ChugWilliams}). Recall that
\begin{align*}
\tau^+_0 := \inf\{t\,:\, X^+_t =0\}.
\end{align*}
We have two equivalent scenarios.
\begin{itemize}
\item[i)] We can consider the sticky process
\begin{align*}
X_t = X^+_{V^{-1}_t} \quad \textrm{on} \quad [a,b] = \Lambda_{1} := \{x \in \mathbb{R}\,:\, |x| \leq 1 \} \quad \textrm{with} \quad V_t = t + (\eta/\sigma) \gamma^+_t 
\end{align*}
where $(\eta/\sigma) \gamma^+_{\tau^+_0}$ is an exponential random variable with mean 
\begin{align*}
(\eta/\sigma)\mathbf{E}_x[\gamma^+_{\tau^+_0}] = \eta/\sigma, \quad x \in \{a, b\}.
\end{align*}
\item[ii)] We can consider the sticky process
\begin{align*}
X_t = X^+_{V^{-1}_t} \quad \textrm{on} \quad [a,b] = \Lambda_{\eta/\sigma} := \{x \in \mathbb{R}\,:\, |x| \leq \eta/\sigma \} \quad \textrm{with} \quad V_t = t + \gamma^+_t
\end{align*}
where $\gamma^+_{\tau^+_0}$ is an exponential random variable with mean 
\begin{align*}
\mathbf{E}_x[\gamma^+_{\tau^+_0}] = \eta/\sigma, \quad x \in \{a, b\}.
\end{align*}
\end{itemize}
In both cases, for $X^+_0 \in \{a, b\}$ or equivalently for $X_0 \in \{a, b\}$,  
\begin{align*}
\tau^+_0 \quad \textrm{and} \quad V_{\tau^+_0}
\end{align*}
provide the following information:
\begin{itemize}
\item $\tau^+_0$ is the occupation time of $X^+$ on $[a, 0)$ under $\mathbf{P}_a$ or the occupation time of $X^+$ on $(0, b]$ under $\mathbf{P}_b$;
\item $V_{\tau^+_0}$ is the occupation time of $X$ on $[a, 0)$ under $\mathbf{P}_a$ or the occupation time of $X$ on $(0, b]$ under $\mathbf{P}_b$.
\end{itemize}
 
We provide further details for the case ii) which is of interest further on. The process $X$ is a special case of the sticky process $\bar{X}$ on $[a,b]$. The $1$-dimensional process $\bar{X}$ on $[0, \infty)$ has been introduced in \cite{Dov22, NLBVP-ItheHline}. Here we consider the problem
\begin{align}
\left\lbrace
\begin{array}{l}
\displaystyle \frac{\partial \varphi}{\partial t}(t,x) = \varphi^{\prime \prime}(t,x), \quad t>0, \; x \in \Omega =(a,b), \\
\\
\displaystyle D^\alpha_t \varphi(t,a) = \varphi^\prime(t,a), \quad t>0, \\
\\
\displaystyle D^\alpha_t \varphi(t,b) = - \varphi^\prime(t,b), \quad t>0,\\
\\
\displaystyle \varphi(0,x) = f(x), \quad x \in \bar{\Omega} = [a,b].
\end{array}
\right .
\end{align}
with 
\begin{align*}
\varphi(t,x) = \mathbf{E}_x[f(\bar{X}_t)], \quad \bar{X}_t = X^+_{\bar{V}^{-1}_t}, \quad \bar{V}_t = t + H_{\gamma^+_t}, \quad \bar{X}_0=x
\end{align*}
for $t\geq 0$, $x \in [a,b]$ where $a=-\eta/\sigma$ and $b=\eta/\sigma$. This well agrees with item ii) above which is the case $\alpha=1$.

\subsubsection{The case $\alpha=1$}  We provide the following result for $X^+$ on $\Lambda_{\eta/\sigma}$. We consider the extra time $V_t -t$ for $X$ as defined in ii) of this section and observe that $t=V^{-1}_t + \gamma^+_{V^{-1}_t}$ and
\begin{align*}
\gamma_t := \int_0^t \mathbf{1}_{\{a, b\}}(X_s) ds = t - V^{-1}_t.
\end{align*}

\begin{theorem}
Fix $a = -(\eta/\sigma)$ and $b=+(\eta/\sigma)$. Assume $X^+_0 \sim Unif\{a,b\}$.  Then, 
\begin{align*}
\textrm{for each fixed $t\geq 0$} \quad \gamma^+_{r^+_t}(a) - \gamma^+_{r^+_t}(b) \stackrel{d}{=} Z^\mathtt{tel}_t  \quad \textrm{with} \quad \alpha =1.
\end{align*}
\label{thm:difUNO}
\end{theorem}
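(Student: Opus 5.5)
Since $\alpha=1$ we have $H_t=t$, $L_t=t$, $N^L=N$ and $T^L_j=T_j$. The target therefore reads
\begin{align*}
Z^\mathtt{tel}_t=\sum_{j=1}^{N_t}\theta_j e_j+\theta_{N_t+1}(t-T_{N_t}),
\end{align*}
a piecewise-linear path with slopes $\pm1$. The slopes are i.i.d.\ symmetric signs, and the slope changes at the epochs of a Poisson process of rate $\sigma/\eta$. The plan is to decompose the path $t\mapsto \gamma^+_{r^+_t}(a)-\gamma^+_{r^+_t}(b)$ in the same way, working on the local time scale.

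\textbf{Step 1 (local time scale).} On the local time scale, time is measured by $\gamma^+ = \gamma^+(a)+\gamma^+(b)$, so $\gamma^+_{r^+_t}(a)+\gamma^+_{r^+_t}(b)=t$. While $X^+$ sits at $a$ (that is, before it reaches the midpoint $0$ and is captured by $b$), only $\gamma^+(a)$ increases, and the difference moves with slope $+1$. Symmetrically, near $b$ the slope is $-1$. Hence $D_t:=\gamma^+_{r^+_t}(a)-\gamma^+_{r^+_t}(b)$ is piecewise linear with slopes $\pm1$.

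\textbf{Step 2 (switching epochs).} Define recursively the hitting times of $0$ by $X^+$ after each boundary visit. Concretely, let $\tau_1=\tau^+_0$, and let $\tau_{k+1}$ be the first hitting time of $0$ after the first hitting of $\{a,b\}$ following $\tau_k$. The local-time increments $e_k:=\gamma^+_{\tau_{k}}-\gamma^+_{\tau_{k-1}}$ are i.i.d.\ by the strong Markov property at the hitting times of $\{a,b\}$ and by the symmetry $a=-b$. Each $e_k$ is exponential with mean $\eta/\sigma$ (case ii) and \cite[Theorem 7.7]{ChugWilliams}). Hence the epochs $T_n=\sum_{k\le n}e_k$ are the jump times of a Poisson process $N$ of rate $\sigma/\eta$. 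Here $e_0=0$ because $X^+_0\in\{a,b\}$.

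\textbf{Step 3 (signs).} Let $\theta_k=+1$ if the $k$-th boundary visit is to $a$ and $\theta_k=-1$ if it is to $b$. The first sign $\theta_1$ is uniform by the assumption $X^+_0\sim Unif\{a,b\}$. After each return to $0$, symmetry shows that the next endpoint hit is $a$ or $b$ with probability $1/2$, and this choice is independent of the past and of the local time collected. The local time accrued during the excursion from $0$ to the boundary is zero, so it does not affect the clock. This gives \eqref{AllIndep}, i.e.\ $\{\theta_j\}\indep\{e_j\}$.

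\textbf{Step 4 (conclusion).} On $[T_n,T_{n+1})$ the local time is collected only at the endpoint labelled $\theta_{n+1}$. Therefore
\begin{align*}
D_t=\sum_{j=1}^{n}\theta_je_j+\theta_{n+1}(t-T_n),
\end{align*}
which has the same law as $Z^\mathtt{tel}_t$ for each fixed $t$ (indeed as processes). Alternatively, one can verify via the double transform \eqref{ZteldoubleTransform} with $\alpha=1$.

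\textbf{Main obstacle.} The delicate point is Step 3: independence of $\theta_{k+1}$ from $e_k$ and from the past. The issue is that the excursion structure between two consecutive hittings of $0$ straddles a boundary period. The strong Markov property has to be applied at $\tau_k$ (where $X^+=0$), with the reflection symmetry $x\mapsto -x$ of $X^+$ on $[-\eta/\sigma,\eta/\sigma]$ preserving the law of the local times while swapping $a$ and $b$. I would make this rigorous by showing that, under $\mathbf{P}_0$, the pair $(\text{sign of first boundary hit},\ \gamma^+_{\tau'})$ has a product law, where $\tau'$ is the next hitting of $0$ after that boundary hit.
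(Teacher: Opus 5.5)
Your proof is correct, but it takes a different route from the paper.

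\textbf{The paper's route.} The paper argues at the level of transforms. It conditions on the local time $\zeta_0\sim\operatorname{Exp}(\sigma/\eta)$ collected at the starting endpoint before the first visit to the midpoint. It then writes renewal equations expressing $\varphi_a$ and $\varphi_b$ through $\varphi_0$, and closes the system with $\varphi_0=\frac12\varphi_a+\frac12\varphi_b$. Finally it identifies the resulting Laplace--Fourier transform with \eqref{ZteldoubleTransform} at $\alpha=1$.

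\textbf{Your route.} You build an explicit coupling. The local-time-scale difference is a fixed functional of the sequence $(\theta_j,e_j)_{j\ge1}$ read off from $X^+$. $Z^\mathtt{tel}$ at $\alpha=1$ is the same functional of a sequence with the same product law. The product-law claim you flag as the main obstacle goes through as you indicate:
\begin{itemize}
\item[(i)] the strong Markov property at the midpoint hitting times $\tau_k$ gives a fresh uniform sign, independent of the path up to $\tau_k$;
\item[(ii)] the strong Markov property at the endpoint hitting times, together with the symmetry $a=-b$, gives an $\operatorname{Exp}(\sigma/\eta)$ holding time whose conditional law does not depend on the path up to that endpoint hit, including the sign just chosen.
\end{itemize}
In effect you prove the paper's later pathwise Theorem~\ref{thmDiffLT}. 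You also add an observation that is only implicit in the paper's averaging step $\varphi_0=\frac12\varphi_a+\frac12\varphi_b$: in this symmetric configuration the chain $\theta^*$ is simply an i.i.d.\ uniform sequence.

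\textbf{What each buys.} Your route gives a stronger conclusion: equality in law of the whole processes, not only of the one-dimensional marginals. It needs neither the transform computation of Theorem~\ref{thm:Ztelpde} nor uniqueness of the Laplace--Fourier transform. The paper's route is a short computation that produces $\varphi_0$ directly inside its transform machinery. Theorem~\ref{thm:difALPHA} then follows by a subordination step: replace $\lambda$ by $\lambda^\alpha$ and multiply by $\lambda^{\alpha-1}$. Your coupling extends to that case as well. Indeed, $\gamma^+_{\tilde V^{-1}_t}(a)-\gamma^+_{\tilde V^{-1}_t}(b)$ is your local-time-scale process evaluated at the independent random time $L_t$, since $D$ is constant on the flat stretches of $\gamma^+$.
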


\begin{proof}
We aim to prove the identity in law between the unscaled time-changed local time difference $\gamma^+_{r^+_t}(a) - \gamma^+_{r^+_t}(b)$ and the coupled CTRW with a linear residue under the initial condition $\mathbf{P}(X^+_0 = a) = \mathbf{P}(X^+_0 = b) = \frac{1}{2}$. For the reflecting boundaries as $a = -\eta/\sigma$ and $b = \eta/\sigma$ the effective half-width of the interval is given by $\eta/\sigma$. The boundary local times satisfy the total mass identity $\gamma^+_{r^+_t}(a) + \gamma^+_{r^+_t}(b) = t$ for all $t \geq 0$ in the local time scale $t$, which yields directly
\begin{equation}
\gamma^+_{r^+_t}(a) - \gamma^+_{r^+_t}(b) = 2\gamma^+_{r^+_t}(a) - t.
\label{geom_red}
\end{equation}
Let $\zeta_0$ be the total local time accumulated on the boundary of origin before the process reaches the midpoint for the first time. In the local time scale $t$, the process hits the midpoint exactly at $t = \zeta_0$. From excursion theory, under $X^+_0 = a$, $\zeta_0$ follows an exponential distribution with parameter $m = \sigma/\eta$. We now evaluate
\begin{align*}
\varphi_x(\lambda, \xi):= \mathbf{E}_x\left[ \int_0^\infty e^{-\lambda t} e^{-i\xi \big( \gamma^+_{r^+_t}(a) - \gamma^+_{r^+_t}(b) \big)} dt\right], \quad x \in \{a,0,b\}, \quad \lambda>0, \; \xi \in \mathbb{R}.
\end{align*}
To compute $\varphi_a(\lambda, \xi)$, we decompose the expectation by conditioning on the hitting time $\zeta_0$. For $t < \zeta_0$, the process has not yet reached the origin, meaning 
\begin{align*}
\gamma^+_{r^+_t}(b) = 0 \quad \textrm{and} \quad \gamma^+_{r^+_t}(a) = t < \zeta_0.
\end{align*}
For $t \geq \zeta_0$, the strong Markov property ensures that the process restarts from the midpoint with an accumulated geometric offset $\gamma^+_{r^+_{\zeta_0}}(a) = \zeta_0$ on the left boundary, so that $\gamma^+_{r^+_t}(a) = \zeta_0 + \gamma^+_{r^+_{t-\zeta_0}}(a)$ under $X^+_{\zeta_0} = 0$. Thus, we can write
\begin{align}
\mathbf{E}_a\left[ e^{-\text{i}\xi \left(2\gamma^+_{r^+_t}(a) - t\right)} \right]  & = \mathbf{E}_a\left[ e^{-\text{i}\xi t} \mathbf{1}_{(t < \zeta_0)} \right] + \mathbf{E}_a\left[ e^{-\text{i}\xi \left(2\zeta_0 + 2\gamma^+_{r^+_{t-\zeta_0}}(a) - t\right)} \mathbf{1}_{(t \geq \zeta_0)} \right] \nonumber \\
&= e^{-\left(m + \text{i}\xi\right)t} + m \int_0^t e^{-(m + i \xi ) s} \mathbf{E}_0\left[ e^{-\text{i}\xi \left(2\gamma^+_{r^+_{t-s}}(a) - (t-s)\right)} \right] ds
\label{decom_integral}
\end{align}
where the second term is a convolution. Taking the Laplace transform of \eqref{decom_integral}, 
\begin{equation}
\varphi_a(\lambda, \xi) = \frac{1}{\lambda + \text{i}\xi + m} + \frac{m}{\lambda + \text{i}\xi + m} \varphi_0(\lambda, \xi).
\label{va_laplace}
\end{equation}
By symmetry, starting from $X^+_0 = b$ yields
\begin{equation}
\varphi_b(\lambda, \xi) = \frac{1}{\lambda - \text{i}\xi + m} + \frac{m}{\lambda - \text{i}\xi + m} \varphi_0(\lambda, \xi).
\label{vb_laplace}
\end{equation}
Since the process is immediately projected back to either boundary with equal probability upon hitting the midpoint, the renewal structure implies 
\begin{align*}
\varphi_0(\lambda, \xi) = \frac{1}{2} \varphi_a(\lambda, \xi) + \frac{1}{2} \varphi_b(\lambda, \xi).
\end{align*}
and we get
\begin{equation}
\varphi_0(\lambda, \xi) = \frac{\lambda+m}{\lambda (\lambda + m) + \xi^2}.
\label{v0_sol}
\end{equation}
From \eqref{va_laplace} and \eqref{vb_laplace}, averaging over the initial configurations $\mathbf{P}(X^+_0 = a) = \mathbf{P}(X^+_0 = b) = \frac{1}{2}$, we write
\begin{align*}
\frac{1}{2} \varphi_a(\lambda, \xi) + \frac{1}{2} \varphi_b(\lambda, \xi) = \frac{\lambda+m}{(\lambda+ m)^2 + \xi^2} \big( 1 + m \varphi_0(\lambda, \xi) \big) = \frac{\lambda + m}{\lambda^2 + m\lambda + \xi^2}.
\end{align*}
We obtain 
\begin{align*}
\mathbf{E} \left[ \mathbf{E}_{X^+_0} \left[ \int_0^\infty e^{-\lambda t} e^{-i\xi \big( \gamma^+_{r^+_t}(a) - \gamma^+_{r^+_t}(b) \big)} dt \right] \right] = \frac{\lambda + (\sigma/\eta)}{\lambda^2 + (\sigma/\eta) \lambda + \xi^2}
\end{align*}
which coincides with \eqref{ZteldoubleTransform} for $\alpha=1$. This completes the proof.
\end{proof}

\begin{remark}
We observe that
\begin{align*}
\mathbf{E} \left[ \mathbf{E}_{X^+_0} \left[ \int_0^\infty e^{-\lambda t} e^{-i\xi \big( \gamma^+_{r^+_t}(a) - \gamma^+_{r^+_t}(b) \big)} dt \right] \right] =  \mathbf{E}_{0} \left[ \int_0^\infty e^{-\lambda t} e^{-i\xi \big( \gamma^+_{r^+_t}(a) - \gamma^+_{r^+_t}(b) \big)} dt \right], \quad \lambda>0,\, \xi \in \mathbb{R}
\end{align*}
with $0=(a+b)/2$.
\end{remark}

\subsubsection{The case $\alpha \in (0,1]$}

For $X^+$ on $\Lambda_{\eta/\sigma}$, we now consider the sticky Brownian motion $\bar{X}=\{\bar{X}_t\}_{t \geq 0}$ with
\begin{align*}
\bar{X}_t = X^+_{\bar{V}^{-1}_t}, \quad \bar{V}_t = t + H_{\gamma^+_t}  
\end{align*}
and $\bar{V}_{\tau^+_\epsilon}$ plays the role of occupation time as described above for $\alpha=1$. Also in this case, 
\begin{align*}
\bar{V}_t - t, \quad t \geq 0
\end{align*}
determines the extra time on the boundary for $\bar{X}$. The extra time is the local time accumulated by $\bar{X}$ during the $j$-th visit of the process $X^+$ to the neighbourhood $\{x \in [a,b]\,:\, \operatorname{dist}(x, \{a,b\}) < \eta/\sigma\}$. Thus, $\bar{V}_t - t$ determines the residence times. Observe that $\bar{V}^{-1}_{\bar{V}_t}=t$ is always true and, for $\bar{X}_0= X^+_0  \in \{a,b\}$, 
\begin{align*}
\gamma^+_{\bar{V}^{-1}_{\bar{V}_{\tau^+_0}}} = \gamma^+_{\tau^+_0} \quad \textrm{whereas} \quad \bar{\gamma}_{\bar{V}^{-1}_{\tau^+_0}} = H_{\gamma^+_{\tau^+_0}}
\end{align*}
where the local time
\begin{align*}
\bar{\gamma}_t := \int_0^t \mathbf{1}_{\{a,b\}}(\bar{X}_s) ds = t - \bar{V}^{-1}_t
\end{align*}
coincides with the regulator $\tilde{\gamma}_t := \gamma^+_{\bar{V}^{-1}_t}$ only for $\alpha=1$. In the regulator-local-time scale we can not see $H$.\\

We consider the inverse to $\bar{V}_t - t = H_{\gamma^+_t}$ as a time change in the subsequent discussion. Let us define
\begin{align*}
\tilde{V}^{-1}_t = \inf\{s\,:\, \tilde{V}_s>t\} \quad \textrm{where} \quad \tilde{V}_t : = H_{\gamma^+_t}, \quad t \geq 0.
\end{align*}
We focus on the process
\begin{align*}
\gamma^+_{\tilde{V}^{-1}_t}(a) - \gamma^+_{\tilde{V}^{-1}_t} (b), \quad t\geq 0
\end{align*}
on the residence time scale.

\begin{theorem}
Fix $a = -(\eta/\sigma)$ and $b=+(\eta/\sigma)$. Assume $X^+_0 \sim Unif\{a,b\}$.  Then, 
\begin{align*}
\textrm{for each fixed $t\geq 0$} \quad  \gamma^+_{\tilde{V}^{-1}_t}(a) - \gamma^+_{\tilde{V}^{-1}_t} (b) \stackrel{d}{=} Z^\mathtt{tel}_t  \quad \textrm{with} \quad \alpha \in (0,1].
\end{align*}
\label{thm:difALPHA}
\end{theorem}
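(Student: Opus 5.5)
The plan is to reduce the statement to Theorem~\ref{thm:difUNO} through a time change, and then compare double transforms with \eqref{ZteldoubleTransform}. Set $A_s := \gamma^+_{r^+_s}(a) - \gamma^+_{r^+_s}(b)$ on the local time scale $s$. Since $\gamma^+$ is continuous and non-decreasing, and $H$ is strictly increasing and independent of $X^+$, we have $\tilde V_t = H_{\gamma^+_t}$. Hence
\begin{align*}
\gamma^+_{\tilde V^{-1}_t} = L_t,
\end{align*}
where $L$ is the inverse of $H$, because the flat stretches of $\gamma^+$ do not change the value of the local time. It follows that $\gamma^+_{\tilde V^{-1}_t}(a)-\gamma^+_{\tilde V^{-1}_t}(b) = A_{L_t}$, up to the choice of the point inside a flat interval of $\gamma^+$, on which neither $\gamma^+(a)$ nor $\gamma^+(b)$ moves. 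The quantity to study is therefore the process $A$ subordinated by the inverse stable subordinator $L$, with $L$ independent of $A$.

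Next I compute the double transform by conditioning on $L$. By independence,
\begin{align*}
\mathbf{E}\left[\int_0^\infty e^{-\lambda t} e^{-i\xi A_{L_t}}dt\right] = \int_0^\infty \mathbf{E}[e^{-i\xi A_s}] \left(\int_0^\infty e^{-\lambda t}\mathbf{P}(L_t\in ds)\,dt\right).
\end{align*}
I will use the standard identity $\int_0^\infty e^{-\lambda t}\mathbf{P}(L_t\in ds)\,dt = \lambda^{\alpha-1}e^{-\lambda^\alpha s}ds$, which follows from $\mathbf{P}(L_t>s)=\mathbf{P}(H_s<t)$ and $\mathbf{E}e^{-\lambda H_s}=e^{-s\lambda^\alpha}$. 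The right-hand side then equals $\lambda^{\alpha-1}\varphi(\lambda^\alpha,\xi)$, where $\varphi(\mu,\xi)$ is the averaged transform from the proof of Theorem~\ref{thm:difUNO}, namely
\begin{align*}
\varphi(\mu,\xi) = \frac{\mu+\sigma/\eta}{\mu^2+(\sigma/\eta)\mu+\xi^2}.
\end{align*}
Substituting $\mu=\lambda^\alpha$ gives $\lambda^{\alpha-1}(\lambda^\alpha+\sigma/\eta)/(\lambda^\alpha(\lambda^\alpha+\sigma/\eta)+\xi^2)$. This is exactly \eqref{ZteldoubleTransform}, and uniqueness of the Laplace--Fourier transform then gives equality of the one-dimensional laws for a.e.\ $t$. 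Both processes are continuous in $t$, $Z^\mathtt{tel}$ pathwise and $A_{L_t}$ because $A$ and $L$ are continuous, so the equality extends to every fixed $t$.

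The main obstacle is the identification $\gamma^+_{\tilde V^{-1}_t}=L_t$, specifically justifying that $\tilde V^{-1}$ composed with $\gamma^+$ inverts $H$ correctly. The issue is that $\tilde V$ has jumps, coming from $H$, and flats, coming from $\gamma^+$, so the inverse is only a generalised inverse. My first attempt will use the representation $\tilde V^{-1}_t = r^+_{L_t}$, up to a right-continuity convention, and check it on the event that $t$ lies in a jump interval $[H_{s-},H_s)$, where both sides give $\gamma^+ = s$. A secondary point is the independence of $L$ from $X^+$ under $X^+_0\sim\mathrm{Unif}\{a,b\}$; this holds by the standing construction \eqref{AllIndep}. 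The Fubini exchange needs no separate argument, since the integrand is bounded by $1$.
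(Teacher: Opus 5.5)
Your proof is correct. It reaches the paper's transform identity by a different mechanism: you pass from the residence-time scale to the local-time scale pathwise, whereas the paper does it inside the transform.

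The paper never identifies $\tilde V^{-1}$ explicitly. It writes $\int_0^\infty e^{-\lambda t}e^{-i\xi D_{\tilde V^{-1}_t}}dt$ as the Stieltjes integral $-\frac{1}{\lambda}\int_0^\infty e^{-i\xi D_t}\,d e^{-\lambda\tilde V_t}$ on the clock of $X^+$. It then replaces $e^{-\lambda H_{\gamma^+_t}}$ by $e^{-\lambda^\alpha\gamma^+_t}$ using $H\indep X^+$, substitutes $t=r^+_s$ to get $\lambda^{\alpha-1}\varphi_0(\lambda^\alpha,\xi)$ from \eqref{v0_sol}, and averages over the starting point as in Theorem \ref{thm:difUNO}. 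This avoids generalized inverses, but it silently exchanges the expectation over $H$ with a Stieltjes integral whose integrand depends on $X^+$.

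You instead prove $D_{\tilde V^{-1}_t}=A_{L_t}$ and then use the potential density $\lambda^{\alpha-1}e^{-\lambda^\alpha s}$ of $L_t$. Your bookkeeping in fact holds for every $t$ and every path, with no a.s.\ qualification. The set $\{u: H_u>t\}$ is either $[L_t,\infty)$ or $(L_t,\infty)$, so $\tilde V^{-1}_t$ equals $r^+_{L_t-}$ or $r^+_{L_t}$. On the interval between these two times $\gamma^+\equiv L_t$, so neither $\gamma^+(a)$ nor $\gamma^+(b)$ moves.

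Your route buys two things the paper lacks:
\begin{itemize}
\item a pathwise version of the identity $\gamma^+_{\tilde V^{-1}_t}(a)-\gamma^+_{\tilde V^{-1}_t}(b)\stackrel{d}{=}\gamma^+_{r^+_{L_t}}(a)-\gamma^+_{r^+_{L_t}}(b)$, which the paper only asserts after Theorem \ref{thm:DifLTbar};
\item an explicit upgrade from a.e.\ $t$ to every fixed $t$. This is legitimate because $|A_s-A_u|\le|s-u|$ and $L$ is continuous, and the paper does not address it.
\end{itemize}

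The pathwise identity also lets you bypass transforms altogether. Since $N^L_t=N_{L_t}$, $Z^\mathtt{tel}_t$ is exactly the $\alpha=1$ process evaluated at $L_t$. Conditioning on $L_t$, which is independent of both $A$ and that process, reduces the claim directly to Theorem \ref{thm:difUNO} at each fixed time.

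One minor citation point: $H\indep X^+$ is part of the construction of $\bar X$, not of \eqref{AllIndep}, which only concerns $N$, $H$, $\theta$ and $e$.
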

\begin{proof}
Let us write
\begin{align*}
D_t = D_t (a,b) := \gamma^+_t(a) - \gamma^+_t(b).
\end{align*}
Observe that
\begin{align*}
\mathbf{E}_0\left[ \int_0^\infty e^{-\lambda t} e^{-i\xi D_{\tilde{V}^{-1}_t}} dt \right] 
= & - \frac{1}{\lambda} \mathbf{E}_0\left[ \int_0^\infty  e^{-i\xi D_{\bar{V}^{-1}_t}} de^{-\lambda t} \right]\\
= & - \frac{1}{\lambda} \mathbf{E}_0\left[ \int_0^\infty  e^{-i\xi D_t} d \tilde{M}_t \right], \quad \tilde{M}_t = e^{-\lambda \tilde{V}_t}, \quad \lambda>0.
\end{align*}
Recall that $H \indep X^+$ and 
\begin{align*}
\mathbf{E}[e^{-\lambda \tilde{V}_t} | X^+_t] = e^{-\lambda^\alpha \gamma^+_t}.
\end{align*}
Thus, we get
\begin{align*}
\mathbf{E}_0\left[ \int_0^\infty e^{-\lambda t} e^{-i\xi D_{\tilde{V}^{-1}_t}} dt \right] 
= & \lambda^{\alpha - 1} \mathbf{E}_0\left[ \int_0^\infty e^{-\lambda^\alpha  \gamma^+_t} e^{-i\xi D_t} d \gamma^+_t \right]\\
= & \lambda^{\alpha - 1} \mathbf{E}_0\left[ \int_0^\infty e^{-\lambda^\alpha t} e^{-i\xi D_{r^+_t}} dt \right].
\end{align*}
From \eqref{v0_sol}, we obtain
\begin{align*}
\mathbf{E}_0\left[ \int_0^\infty e^{-\lambda t} e^{-i\xi D_{\tilde{V}^{-1}_t}} dt \right] 
= \frac{\lambda^{\alpha -1} (\lambda^\alpha + m)}{\lambda^\alpha (\lambda^\alpha + m) + \xi^2}.
\end{align*}
By following the proof of the previous theorem we get
\begin{align*}
\mathbf{E} \left[ \mathbf{E}_{X^+_0} \left[ \int_0^\infty e^{-\lambda t} e^{-i\xi D_{\tilde{V}^{-1}_t}} dt \right] \right] = \frac{\lambda^{\alpha -1} (\lambda^\alpha + (\sigma/\eta))}{\lambda^{2\alpha} + (\sigma/\eta) \lambda^\alpha + \xi^2}, \quad \lambda>0,\; \xi \in \mathbb{R}.
\end{align*}
This concludes the proof.
\end{proof}

\subsubsection{Pathwise decomposition and multi-scale analysis}
\label{sec:Path}
In this section, we develop a pathwise decomposition for the boundary local time. By performing a multi-scale analysis based on the inverse processes, we rigorously map the non-Markovian memory of the trapping boundary into a coupled continuous-time random walk.\\

Assume $X^+_0 \in \{a, b\}$ and consider the following setting:
\begin{itemize}
\item[i)] $T^*_j(\partial) := \inf \{t > T^*_j\,:\, X^+_t \in \{a,b\}\}$ for $j \in \mathbb{N}$ with $T^*_0(\partial)=0$;
\item[ii)] $T^*_j(0) := \inf\{t> T^*_{j-1}(\partial)\,:\, X^+_t =0\}$ for $j \in \mathbb{N}$. We simply write $T^*_j$; 
\item[iii)] $e^*_j := F\big(\gamma^+_{T^*_j}, \gamma^+_{T^*_{j-1}(\partial)}\big)$ for $j \in \mathbb{N}$;
\item[iv)] $N^*=\{N^*_t\}_{t \geq 0}$ with $N^*_t = \max\{j \in \mathbb{N}_0 \,:\, T^*_j < t\}$. In particular, 
\begin{align*}
\mathbf{P}(N^*_t \geq j) = \mathbf{P}(T^*_j < t)
\end{align*}
and
\begin{align*}
T^*_0 (\partial)=0, \quad \mathbf{P}(N^*_t=0) = \mathbf{P}(T^*_1 >t), \quad  T^*_1 = \tau^+_0 + F\big(\gamma^+_{T^*_1}, 0\big)
\end{align*}
with
\begin{align*}
\tau^+_0 = \inf\{t\,:\, X^+_t = 0 \} \quad \textrm{as defined above and} \quad e^*_1 = \gamma^+_{\tau^+_0};
\end{align*}
\item[v)] $\theta^*_{N^*_t + 1} = +1$ iff $X^+_{T^*_{N^*_t}(\partial)} = a$ and $\theta^*_{N^*_t + 1} = -1$ iff $X^+_{T^*_{N^*_t}(\partial)} =b$, that is 
\begin{align*}
\theta^*_{j} = \left\lbrace
\begin{array}{ll}
\displaystyle +1, & X^+_{T^*_{j-1}(\partial)} =a,\\
\\
\displaystyle -1, & X^+_{T^*_{j-1}(\partial)} =b,
\end{array}
\right . \quad j \in \mathbb{N}.
\end{align*}
In particular, $j \in \mathbb{N}$ denotes the $j$-th visit of the boundary $\{a, b\}$ and $\theta^*_j$ says in which boundary point the process accumulates local time in the renewal epoch $T^*_{j-1}$.
\end{itemize}

\begin{theorem}
Let the previous setting prevail. Then, 
\begin{align*}
\gamma^+_{r^+_t}(a) - \gamma^+_{r^+_t}(b) = \sum_{j=1}^{N_t} \theta^*_j e_j + \theta^*_{N_t + 1} \left( t - T_{N_t} \right).
\end{align*}
\label{thmDiffLT}
\end{theorem}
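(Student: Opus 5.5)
The identity is pathwise, so I would prove it by a direct decomposition of the path of $X^+$ into successive boundary sojourns, read on the local time scale $r^+$. The natural reading of the setting is the following. On the local time scale, the epochs $T_j$ of the statement are the values $\gamma^+_{T^*_j}$, the total boundary local time accumulated up to the $j$-th hitting of the midpoint $0$. Between two hittings of $0$, $X^+$ touches only one of the two boundary points. The increments $e_j = T_j - T_{j-1} = \gamma^+_{T^*_j} - \gamma^+_{T^*_{j-1}}$ are then the local times collected at that single point. I would first fix this identification, namely that $e^*_j$ with $F(x,y)=x-y$ equals $e_j$. By the strong Markov property at the hitting times of $0$ and at the returns to $\{a,b\}$, together with the excursion-theory fact used in Theorem~\ref{thm:difUNO} (the local time at the starting endpoint before hitting $0$ is exponential with parameter $\sigma/\eta$), these increments are i.i.d.\ exponential with parameter $\sigma/\eta$. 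Hence $N_t = \max\{j : T_j < t\}$ is a Poisson process of that intensity, consistent with the standing setting.

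The key steps, in order, are the following.

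\emph{Localisation.} On $[T^*_{j-1}(\partial), T^*_j]$ the process starts at $X^+_{T^*_{j-1}(\partial)} \in \{a,b\}$ and does not reach the opposite endpoint before hitting $0$. So only one of $\gamma^+(a)$, $\gamma^+(b)$ increases there. By the definition of $\theta^*_j$ in item v),
\begin{align*}
d\gamma^+_s(a) - d\gamma^+_s(b) = \theta^*_j\, d\gamma^+_s \quad \textrm{on this interval.}
\end{align*}
On $[T^*_j, T^*_j(\partial)]$ the process travels from $0$ to a boundary point and accumulates no local time.

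\emph{Time change.} Since $\gamma^+$ is continuous and non-decreasing with $\gamma^+_{r^+_t}=t$, the substitution $s = r^+_u$ gives
\begin{align*}
\gamma^+_{r^+_t}(a) - \gamma^+_{r^+_t}(b) = \int_0^t \theta^*(u)\, du,
\end{align*}
where $\theta^*(u) = \theta^*_j$ for $u \in (T_{j-1}, T_j]$. This uses that $r^+_u$ falls in the $j$-th sojourn block exactly when $u \in (T_{j-1}, T_j]$. The flat stretches of $\gamma^+$ (excursions away from the boundary, including the crossings $0 \to \{a,b\}$) are skipped by $r^+$ and contribute nothing.

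\emph{Summation.} I split $[0,t]$ at $T_1 < \dots < T_{N_t} < t$. The integral equals $\sum_{j=1}^{N_t}\theta^*_j (T_j - T_{j-1})$ plus the last partial block $\theta^*_{N_t+1}(t - T_{N_t})$, which is the claimed identity. Equivalently it equals $\int_0^t \theta^*_{N_s+1}\,ds$, in line with the announced formula \eqref{MarkChain}.

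I expect the main obstacle to be the bookkeeping at the renewal epochs rather than any analytic estimate. Care is needed on three points. First, $\theta^*_j$ must be read at the return time $T^*_{j-1}(\partial)$, not at $T^*_{j-1}$, since between the midpoint hit and the next boundary hit no local time accrues. Second, $r^+$ jumps over these intervals, so the right-continuous inverse must be handled consistently with the strict inequality in $N_t$ and with ties $t = T_j$, which are null events. Third, the definition of $T^*_j(\partial)$ in item i) must be read as the first boundary hit after $T^*_j$. Finally, I would remark that the Markov-chain structure of $\theta^*$ (the endpoint reached after hitting $0$ is $a$ or $b$ with probability $1/2$, independently of the past) is not needed for the pathwise identity itself. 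It matters only when comparing with Theorem~\ref{thm:difUNO}.
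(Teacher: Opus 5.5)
Your proposal is correct and follows essentially the paper's route. The paper first writes the block decomposition $W_t$ of $\gamma^+_t(a)-\gamma^+_t(b)$ in physical time, then composes with $r^+$ and identifies $N^*_{r^+_t}=N_t$, $\gamma^+_{T^*_{N^*_{r^+_t}}}=T_{N_t}$ and $e^*_j=e_j$; your localisation $d\gamma^+_s(a)-d\gamma^+_s(b)=\theta^*_j\,d\gamma^+_s$ followed by the substitution $s=r^+_u$ does the same thing, with more explicit justification of why only one endpoint accrues local time per block. One small correction: for the all-$t$ pathwise statement the epochs $t=T_j$ are not null events, but both sides are continuous in $t$, so the convention $T_j<t$ versus $T_j\le t$ is harmless.
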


\begin{proof}
Assume $F(\gamma^+_{T^*_j},\gamma^+_{T^*_{j-1}(\partial)})= \gamma^+_{T^*_j} - \gamma^+_{T^*_{j-1}(\partial)}$ for $j \in \mathbb{N}$. Thus
\begin{align*}
e^*_j :=\gamma^+_{T^*_j} - \gamma^+_{T^*_{j-1}(\partial)} = \gamma^+_{T^*_j} - \gamma^+_{T^*_{j-1}}.
\end{align*}
Under i)-v) the difference between the two local times can be therefore rigorously expressed as
\begin{equation}
\gamma^+_t(a) - \gamma^+_t(b) = \sum_{j=1}^{N^*_t} \theta^*_j e^*_j + \theta^*_{N^*_t+1} \left( \gamma^+_t - \gamma^+_{T^*_{N^*_t}} \right) =: W_t
\label{repW}
\end{equation}
and the left- and right-hand side of \eqref{repW} live in the same probability space. Thus,  
\begin{align*}
\textrm{\eqref{repW} holds pathwise, for every $t\geq 0$.}
\end{align*}

For the sticky Brownian motion 
\begin{align*}
X=X^+_{V^{-1}_t} \quad \textrm{with} \quad V_t = t + \gamma^+_t
\end{align*}
started from one of the boundary points $a$ or $b$, the difference $T^*_j- T^*_{j-1}$ represents the occupation time on the corresponding neighbourhood of the type $\{x \in [a,b]\,:\, \operatorname{dist}(x, \{a,b\}) < \eta/\sigma\}$ in the renewal epoch $T^*_{j-1}$. Thus, during the $j$-th visit of the reflected Brownian motion $X^+$ to the neighbourhood of the boundary point,  the extra time $e^*_j:= \gamma^+_{T^*_j} - \gamma^+_{T^*_{j-1}}$ is the local time accumulated by $X$ at the renewal epoch $T^*_{j-1}$. Recall that
\begin{align*}
\gamma^+_{T^*_j} - \gamma^+_{T^*_{j-1}} = \gamma^+_{T^*_j} - \gamma^+_{T^*_{j-1}(\partial)} \indep  \gamma^+_{T^*_{j-1}(\partial)}
\end{align*}
from the strong Markov property of $X^+$. Assume $X^+_{T^*_{j-1}(\partial)} =a$, then $\gamma^+_{T^*_j} - \gamma^+_{T^*_{j-1}(\partial)}$ is the local time accumulated at the boundary point $a$ up to the first hitting time of the level $0$. This means that 
\begin{align*}
\mathbf{P}_a(\gamma^+_{T^*_j} - \gamma^+_{T^*_{j-1}(\partial)} > t ) = e^{-(\sigma/\eta) t}, \quad j \in \mathbb{N}.
\end{align*}
Analogously, if $X^+_{T^*_{j-1}(\partial)}=b$, then
\begin{align*}
\mathbf{P}_b(\gamma^+_{T^*_j} - \gamma^+_{T^*_{j-1}(\partial)} > t ) = e^{-(\sigma/\eta) t}, \quad j \in \mathbb{N}.
\end{align*}
In particular, $e^*_j \stackrel{d}{=} \gamma^+_{\tau^+_0}$ for all $j \in \mathbb{N}$. Observe that for the process $X^+$ on $[a,b]$  
\begin{align*}
\int_0^{\tau^+_0} \mathbf{1}_{\{a, b\}}(X^+_s)ds =0 
\end{align*}
whereas for the time accumulated by $X$ we have
\begin{align*}
V_{\tau^+_0} - \tau^+_0 = \int_0^{\tau^+_0} \mathbf{1}_{\{a, b\}}(X^+_s) d\gamma^+_s.
\end{align*}
We refer to $W=\{W_t\}_{t\geq 0}$ as a process in the physical time scale and consider the inverse to $V_t - t = \gamma^+_t$ as a time change in the subsequent discussion. \\

On the local time scale we have
\begin{equation}
\gamma^+_{r^+_t}(a) - \gamma^+_{r^+_t}(b) = \sum_{j=1}^{N^*_{r^+_t}} \theta^*_j e^*_j + \theta^*_{N^*_{r^+_t} + 1} \left( t - \gamma^+_{T^*_{N^*_{r^+_t}}} \right)=: W_{r^+_t}
\end{equation}
written also as
\begin{equation}
\gamma^+_{r^+_t}(a) - \gamma^+_{r^+_t}(b) = \sum_{j=1}^{N_t} \theta^*_j e_j + \theta^*_{N_t + 1} \left( t - T_{N_t} \right).
\label{DidLTscale}
\end{equation}
Indeed, the identity 
\begin{align*}
\gamma^+_{T^*_{N^*_{r^+_t}}} = T_{N_t}
\end{align*}
holds in a strict mathematical sense as a direct consequence of the continuity of the boundary local time and the duality between the local time and its inverse. Moreover, the jumps of $r^+_t$ give the time the process $X^+$ spends on the interior $(a,b)$, this implies that $N^*_{r^+_t}$ coincides with $N_t$. From the distribution point of view,
\begin{align*}
\mathbf{P}(N^*_{r^+_t} \geq j) = \mathbf{P}(T^*_j < r^+_t) = \mathbf{P}(\gamma^+_{T^*_j} < t)
\end{align*}
with 
\begin{align*}
\gamma^+_{T^*_j} - \gamma^+_{T^*_{j-1}} = \gamma^+_{T^*_j} - \gamma^+_{T^*_{j-1}} (\partial) = e_j \stackrel{d}{=} \operatorname{Exp}(\sigma/\eta) \quad \textrm{under $\mathbf{P}_x$ with $x=X^+_{T^*_{j-1}}$}
\end{align*}
In particular, $e^*_j=e_j$ for all $j \in \mathbb{N}$ where, we recall that,  
\begin{align*}
\mathbf{P}(N_t=0) = \mathbf{P}(T_1 >t), \quad T_1 = e_1
\end{align*}
and $T_j - T_{j-1}=e_j$, $j \in \mathbb{N}$.

\end{proof}

\begin{remark}
We observe that, for $\theta^*_j=1$ we get \eqref{tXpiu} which is now associated to $\gamma^+_{r^+_t} =t$.
\end{remark}

Let us consider
\begin{align*}
\bar{\gamma}_t(a) - \bar{\gamma}_t(b) \quad \textrm{where} \quad \bar{\gamma}_t = \bar{\gamma}_t(a) + \bar{\gamma}_t(b)
\end{align*}
is the boundary local time of the sticky Brownian motion $\bar{X}$ on $[a,b]$ and introduce the inverse $\bar{r}_t$ of the local time $\bar{\gamma}_t$, thus $\bar{\gamma}_{\bar{r}_t} = t$. This leads to the following fact:
\begin{center}
\begin{minipage}{0.8\textwidth}
    \itshape 
    ``For all $\alpha\in (0,1]$, although the relation $\bar{\gamma}_{\bar{r}_t} = t$ yields a deterministic straight line, it masks the underlying microstructure of the process, concealing the fact that both the duration of the plateaus and the magnitude of the jumps are identically scaled by $H_{\gamma^+_{\tau^+_0}}$.''
\end{minipage}
\end{center}
In this regard, we show that formula \eqref{tXbar} can be associated to $\bar{\gamma}_{\bar{r}_t}$. From this perspective, a comparative study of the differences
\begin{align*}
\bar{\gamma}_{\bar{r}_t}(a) - \bar{\gamma}_{\bar{r}_t}(b) \quad \text{and} \quad \gamma^+_{r^+_t}(a) - \gamma^+_{r^+_t}(b)
\end{align*}
becomes particularly intriguing. 

%

\begin{theorem}
Let the previous setting prevail. Then, 
\begin{align}
\bar{\gamma}_{\bar{r}_t}(a) - \bar{\gamma}_{\bar{r}_t}(b) = \sum_{j=1}^{N^L_t} \theta^*_j e^L_j + \theta^*_{N^L_t+1} \left( t - T^L_{N^L_t} \right).
\label{thmDifLTbar} 
\end{align}
\label{thm:DifLTbar}
\end{theorem}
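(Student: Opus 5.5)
The plan is to reduce the statement to the pathwise identity of Theorem~\ref{thmDiffLT} by time changes, and to use the Corollary (formula \eqref{Ln}) to identify the renewal structure on the $\bar X$ local-time scale.

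\emph{Step 1: write $\bar\gamma$ through $X^+$.} Since $\bar\gamma_t = t - \bar V^{-1}_t$ and $\bar V_s = s + H_{\gamma^+_s}$, at $t=\bar V_s$ we get $\bar\gamma_{\bar V_s} = H_{\gamma^+_s}$. The same argument restricted to each boundary point (the time $\bar X$ spends at $a$ comes only from the jumps of $H$ occurring while $\gamma^+(a)$ increases) gives
\begin{align*}
\bar\gamma_{\bar V_s}(a) = \int_0^s \mathbf{1}_{\{a\}}(X^+_u)\, dH_{\gamma^+_u}, \qquad \bar\gamma_{\bar V_s}(b) = \int_0^s \mathbf{1}_{\{b\}}(X^+_u)\, dH_{\gamma^+_u},
\end{align*}
as Lebesgue--Stieltjes integrals in the sense of \cite{EJP}. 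The inverse $\bar r_t$ of $\bar\gamma$ then corresponds, on the $\gamma^+$ scale, to $L_t$. Indeed, $\bar\gamma_{\bar r_t}=t$, and $H_{\gamma^+_{\bar V^{-1}_{\bar r_t}}}$ equals $t$ or straddles $t$ through a jump of $H$. Hence $\gamma^+$ evaluated at the $X^+$ time corresponding to $\bar r_t$ equals $L_t$, modulo the interpolation convention inside jumps of $H$.

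\emph{Step 2: decompose along the excursion epochs of Theorem~\ref{thmDiffLT}.} On the $\gamma^+$ scale, the boundary visits are separated at the epochs $T_j = \gamma^+_{T^*_j}$ with increments $e_j$. During the $j$-th visit only one endpoint accumulates local time, namely the endpoint $X^+_{T^*_{j-1}(\partial)}$ with sign $\theta^*_j$. Mapping through $H$, the $\bar X$ local time accumulated at that endpoint during visit $j$ is $H_{T_j}-H_{T_{j-1}} = e^L_j$, while the other endpoint accumulates nothing. So the completed visits contribute $\sum_{j\le n}\theta^*_j e^L_j$, where $n$ is the number of completed visits before $\bar r_t$.

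\emph{Step 3: identify that count and the residual.} Because $H$ is strictly increasing and $T_j$ is independent of $H$, we have $\bar\gamma$-time $\ge T^L_j$ if and only if visit $j$ is complete, so $n = N^L_t$. The running visit has sign $\theta^*_{N^L_t+1}$ and contributes $t - \bar\gamma_{\bar r_{T^L_{N^L_t}}} = t - T^L_{N^L_t}$, which matches \eqref{tXbar}. Summing the two pieces gives \eqref{thmDifLTbar} pathwise.

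\textbf{Main obstacle.} The main obstacle is Step~1 at jump times of $H$. During a jump of $H$ the process $\bar X$ sits at a single endpoint while $X^+$ is frozen, and $\bar r_t$ can fall inside such a flat stretch. One must check that this entire jump is attributed to the current endpoint, which holds because the jump occurs at a $\gamma^+$-level strictly inside $(T_{j-1},T_j)$ almost surely. One must also check that jumps of $H$ never straddle an epoch $T_j$; this is guaranteed almost surely by the independence of $N$ and $H$ and the absence of fixed jump times for $H$. Once this almost-sure bookkeeping is done, the identity is an algebraic rewriting of the $\alpha=1$ decomposition \eqref{DidLTscale} with $e_j \mapsto e^L_j$ and $N\mapsto N^L$.
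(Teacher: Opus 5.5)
Your proposal is correct and follows essentially the same route as the paper. Both arguments decompose $\bar\gamma(a)-\bar\gamma(b)$ along the visit epochs of items i)--v) with signs $\theta^*_j$, identify the per-visit increments as $H_{T_j}-H_{T_{j-1}}=e^L_j$, identify the number of completed visits before $\bar r_t$ with $N^L_t$, and read off the linear residual $t-T^L_{N^L_t}$. Your explicit bookkeeping via $\bar\gamma_{\bar V_s}=H_{\gamma^+_s}$ and $\gamma^+_{\bar V^{-1}_{\bar r_t}}=L_t$, together with the almost-sure treatment of the jumps of $H$, fills in details that the paper passes over quickly, since it identifies $N^*_{\bar r_t}$ with $N^L_t$ mainly through the laws of the epochs.
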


\begin{proof}
Assume $F(\gamma^+_{T^*_j}, \gamma^+_{T^*_{j-1}(\partial)}) = H_{\gamma^+_{T^*_j}} - H_{\gamma^+_{T^*_{j-1}(\partial)}}$ for $j \in \mathbb{N}$. Thus, 
\begin{align*}
e^*_j := H_{\gamma^+_{T^*_j}} - H_{\gamma^+_{T^*_{j-1}(\partial)}} = H_{\gamma^+_{T^*_j}} - H_{\gamma^+_{T^*_{j-1}}} = H_{T_j} - H_{T_{j-1}} = T^L_j - T^L_{j-1} =: e^L_j
\end{align*}
where $T_{j-1}=\gamma^+_{T^*_{j-1}}$ is the renewal epoch on the local time scale. Under i)-v) the difference between the two local times can be therefore rigorously expressed as
\begin{align*}
\bar{\gamma}_t(a) - \bar{\gamma}_t(b) = \sum_{j=1}^{N^*_t} \theta^*_j e^*_j + \theta^*_{N^*_t+1} \left( \bar{\gamma}_t - \bar{\gamma}_{T^*_{N^*_t}} \right)
\end{align*}
where $\mathbf{P}(N^*_t \geq j) = \mathbf{P}(T^*_j <t)$. For example,
\begin{align*}
T^*_0=0 \quad \textrm{and} \quad T^*_1 = \tau^+_0 +   H_{\gamma^+_{\tau^+_0}} \textrm{ under $\mathbf{P}_x$ with $x \in \{a,b\}$}.    
\end{align*}
This means that
\begin{align*}
\bar{\gamma}_{T^*_1}(a) - \bar{\gamma}_{T^*_1}(b) = \theta^*_1\, \bar{\gamma}_{T^*_1} = \theta^*_1\, \bar{\gamma}_{T^*_1 (\partial)} =  \theta^*_1\, H_{\gamma^+_{\tau^+_0}}
\end{align*}
is the local time accumulated at the first visited boundary point. On the local time scale, 
\begin{align*}
\bar{\gamma}_{\bar{r}_t}(a) - \bar{\gamma}_{\bar{r}_t}(b) = \sum_{j=1}^{N^*_{\bar{r}_t}} \theta^*_j e^*_j + \theta^*_{N^*_{\bar{r}_t}+1} \left( t - \bar{\gamma}_{T^*_{N^*_{\bar{r}_t}}} \right)
\end{align*}
and $T^*_1(\partial)$ is on the physical time scale. We have
\begin{align*}
\bar{\gamma}_{\bar{r}_t}(a) - \bar{\gamma}_{\bar{r}_t}(b) = \theta^*_1\, t, \quad 0 \leq t < \bar{\gamma}_{T^*_1(\partial)}
\end{align*}
where
\begin{align*}
\mathbf{P}(N^*_{\bar{r}_t} = 0) = \mathbf{P} (\bar{\gamma}_{T^*_1(\partial)} > t) = \mathbf{P}(H_{\gamma^+_{\tau^+_0}} > t) = E_\alpha(- (\sigma/\eta) t^\alpha).
\end{align*}
In particular, $\mathbf{P}(N^*_{\bar{r}_t} \geq j) = \mathbf{P}(T^L_j < t)$ and $N^*_{\bar{r}_t}$ coincides with $N^L_t$. The residual process becomes
\begin{align*}
\theta^*_{N^L_t + 1} \left( t - \bar{\gamma}_{T^L_{N^L_t}} \right) = \theta^*_{N^L_t + 1} \left( t - T^L_{N^L_t} \right).
\end{align*}
Indeed the process increases linearly on the local time scale and the renewal epochs are given according to
\begin{align*}
T^L_{N^L_t} \leq t < T^L_{N^L_t + 1}.
\end{align*}
This rigorously confirms that
\begin{align*}
\bar{\gamma}_{\bar{r}_t}(a) - \bar{\gamma}_{\bar{r}_t}(b) = \sum_{j=1}^{N^L_t} \theta^*_j e^L_j + \theta^*_{N^L_t+1} \left( t - T^L_{N^L_t} \right).
\end{align*}
The structural framework established in items i)--v) provides a pathwise, path-by-path foundation for the local time dynamics. 
\end{proof}

Under the setting of Theorem \ref{thm:difUNO} and Theorem \ref{thm:difALPHA} we can prove that
\begin{align*}
\gamma^+_{\tilde{V}^{-1}_t}(a) - \gamma^+_{\tilde{V}^{-1}_t} (b) \stackrel{d}{=} \gamma^+_{r^+_{L_t}}(a) - \gamma^+_{r^+_{L_t}}(b)
\end{align*}
for every $t\geq 0$. From the representation \eqref{seriesTEL} with $Y_0=0$ and Theorem \ref{thmDiffLT} we also obtain that
\begin{align*}
\gamma^+_{r^+_t}(a) - \gamma^+_{r^+_t}(b) \stackrel{d}{=} \theta^*_0 \int_0^t (-1)^{N_s} ds
\end{align*}
for every $t\geq 0$ and $\theta^*_0$ uniformly distributed on $\{-1, +1\}$. Moreover, by comparing $Y^\alpha$ in Section \ref{sec:introTel} with \eqref{thmDifLTbar} we can write
\begin{align*}
\bar{\gamma}_{\bar{r}_t}(a) - \bar{\gamma}_{\bar{r}_t}(b) \stackrel{d}{=} \theta^*_0 \int_0^t (-1)^{N^L_s}\, ds
\end{align*}
for every $t\geq 0$ and $\theta^*_0$ uniformly distributed on $\{-1, +1\}$. Finally, we observe that, for $\alpha \in (0,1]$, 
\begin{align*}
\bar{\gamma}_{\bar{r}_t}(a) - \bar{\gamma}_{\bar{r}_t}(b) = \int_0^t \theta^*_{N^L_s + 1}\,  ds 
\end{align*}
holds pathwise, for all $t \geq 0$. This directly comes from  \eqref{thmDifLTbar}.

\subsubsection{Boundary analysis via Tanaka's formula and time-change}
\label{sec:further}

The stochastic evolution of $X^+$ can be written via the Skorokhod decomposition as
\begin{equation*}
X^+_t = X^+_0 + B_t + \gamma^+_t(a) - \gamma^+_t(b)
\end{equation*}
where $\{B_t\}_{t \geq 0}$ is a Brownian motion with variance $2t$. By considering the time-change $r^+_t$ into the Skorokhod decomposition, we obtain the boundary process
\begin{equation}
X^+_{r^+_t} = X^+_0 + B_{r^+_t} + \gamma^+_{r^+_t}(a) - \gamma^+_{r^+_t}(b).
\label{eq:time_changed_Skorokhod}
\end{equation}
The support of the time-changed process (boundary trace process) is strictly confined to the boundary,
\begin{equation*}
X^+_{r^+_t} \in \{a, b\} \quad \forall t \geq 0 \quad \textrm{ almost surely.}
\end{equation*}
In particular, the trace process $\theta^+_t := X^+_{r^+_t}$ evolves as a symmetric, continuous-time Markov chain. Since the time-change $r_{t}^{+}$ compresses the independent excursions of the Brownian motion into memoryless transitions, and since the accumulated boundary local time until the exit ($X^+$ hits the midpoint $0$) follows an exponential distribution, the trace process $\{\theta^+_t\}_{t \geq 0}$ inherits the strong Markov property. Thus, it evolves rigorously as a continuous-time Markov chain on the state space $\{a, b\}$ (see the sequence $\theta^*$ defined in Section \ref{sec:Path}). The time-changed process $B_{r^+_t}$ acts as the underlying stochastic source driving the jumps of the trace process. \\

For the sticky process $\bar{X}_t$, we similarly observe that
\begin{equation}
\bar{X}_t = \bar{X}_0 + B_{\bar{V}^{-1}_t} + \tilde{\gamma}_t(a) - \tilde{\gamma}_t(b),
\end{equation}  
and analogous arguments hold. Specifically, the time-changed noise $B_{V^{-1}_t}$ remains constant precisely when the process is trapped at the boundaries, reflecting the presence of positive holding times.

\subsection{$\Omega$ is a smooth domain: an illustrative example}
\label{sec:OmegaSmooth}

We focus on the disk, although our analysis can be adapted to balls in $\mathbb{R}^d$.   The sticky process $\bar{X}$ moves along the path of $X^+$ and $\bar{X}$ accumulates local time $\bar{\gamma}_t$ as $X^+$ accumulates local time $\gamma^+_t$. Rotational symmetry plays a crucial role. Thus, $\bar{X}$ needs more physical time to accumulate the same local time of $X^+$. Indeed, $\bar{V}_t = t + H_{\gamma^+_t}$ runs faster than $t$, the physical time of $X^+$.\\ 

Let us consider the balls $\Omega_1 \subset \Omega \subset \Omega_2$ with radii $r_1 < r <r_2$ centred at $x_0$  where $\Gamma_1= \partial \Omega_1$, $\Gamma= \partial \Omega$, $\Gamma_2= \partial \Omega_2$. We assume that $r_1 = r - \epsilon$ and $r_2=r + \epsilon$ where 
\begin{align*}
\epsilon = (\eta / \sigma).
\end{align*} 
In this case $\partial \Omega$ is a membrane with $\epsilon$-neighbourhood 
\begin{align*}
\Lambda_\epsilon = \{x \in \mathbb{R}^2\,:\, \operatorname{dist}(x, \partial \Omega) < \epsilon \} \quad \textrm{that is} \quad \Lambda_\epsilon = \Omega_2 \setminus \overline{\Omega}_1 
\end{align*}
and the process $X$ on $\Lambda_\epsilon$ is a Brownian motion with sticky boundary $\Gamma_1 \cup \Gamma_2$. While $\partial \Omega$ generally acts as a transmission or skew membrane, we assume symmetric reflection here for simplicity. Let us define
\begin{align*}
\tau^+_{\partial \Omega} :=\inf\{t\geq 0\,:\, X^+_t \in \partial \Omega \} .
\end{align*}
According to \cite[Remark 6.5]{EJP}, we are able to prove that 
\begin{align}
\gamma^+_{\tau^+_{\partial \Omega}}(\Gamma_i) \sim \operatorname{Exp}(\rho_i), \quad \textrm{under $\mathbf{P}_x$, $x \in \Gamma_i$}, \quad i=1,2
\label{expLocTim}
\end{align}
where 
\begin{align*}
\mathbf{E}_x[\gamma^+_{\tau^+_{\partial \Omega}}(\Gamma_1)] = (r-\epsilon) \ln (r/(r-\epsilon))=:1/\rho_1, \quad x \in \Gamma_1
\end{align*}
and 
\begin{align*}
\mathbf{E}_x[\gamma^+_{\tau^+_{\partial \Omega}}(\Gamma_2)] = (r+\epsilon) \ln ((r+\epsilon)/r)=:1/\rho_2, \quad x \in \Gamma_2.
\end{align*}
Observe that, due to the different curvatures, we get $\rho_1 \neq \rho_2$.\\

The purpose of this section is merely to provide an additional example; a detailed discussion is omitted. Our statement is supported by analysing the radial component $R_t = \|X^+_t - x_0\|$ of the underlying reflected process $X^+$ before the sticky time-change is applied. Formally, $R_t$ satisfies the reflected Bessel stochastic differential equation
\begin{align*}
dR_{t}=\frac{d-1}{R_{t}}dt+dB_t + d\ell _{1}(t)-d\ell _{2}(t)
\end{align*}
where $\ell_1$ and $\ell_2$ are the local times respectively associated to $\Gamma_1$ and $\Gamma_2$. \\

We obtain \eqref{expLocTim} by considering separately the $1$-potentials
\begin{align*}
u_i(x) = \mathbf{E}_x[ \exp(-\gamma^+_{\tau^+_{\partial \Omega}}(\Gamma_i)) ], \quad x \in \Lambda_\epsilon^i, \quad i=1,2
\end{align*}
where
\begin{align*}
\Lambda_\epsilon^1 = \Omega \setminus \bar{\Omega}_1, \quad \Lambda_\epsilon^2 = \Omega_2 \setminus \bar{\Omega} \quad \textrm{and} \quad \Lambda_\epsilon = \Lambda_\epsilon^1 \sqcup \partial \Omega \sqcup \Lambda_\epsilon^2.
\end{align*}
In particular, we show that
\begin{align*}
u_i(x) = \mathbf{E}_x[ \exp(-\gamma^+_{\tau^+_{\partial \Omega}}(\Gamma_i)) ] = \frac{1}{1 + 1/\rho_i}, \quad x \in \Gamma_i, \quad i=1,2
\end{align*}
and this proves \eqref{expLocTim}.\\

The functions $u_i$, $i=1,2$ are harmonic functions solving
\begin{align*}
\Delta u_i =0\, \textrm{ on } \Lambda_\epsilon^i,\quad  \partial_{\bf n} u_i = - u_i\, \textrm{ on }  \Gamma_i, \quad u_i=1\, \textrm{ on }  \partial \Omega
\end{align*}
and the outer normal derivative is defined on the external boundary $\Gamma_2$ and the internal boundary $\Gamma_1$, respectively. By exploiting the circular symmetry and assuming $x_0=0$, with a slight abuse of notation, we identify $u_i(x)$ with its radial component $u_i(z)$ for $|x|=z$, and consider the problems 
\begin{equation*}
\left\lbrace
\begin{array}{ll}
(u_1)^{\prime \prime}(z) + \frac{1}{z} (u_1)^\prime(z) = 0, & z \in (r-\epsilon, r),\\
-(u_1)^\prime(r-\epsilon) = -u_1(r-\epsilon),\\
u_1(r)=1.
\end{array}
\right . , \quad |x| = z \in [r-\epsilon, r]
\end{equation*}
and
\begin{equation*}
\left\lbrace
\begin{array}{ll}
(u_2)^{\prime \prime}(z) + \frac{1}{z} (u_2)^\prime(z) = 0, & z \in (r,r+\epsilon),\\
(u_2)^\prime(r+\epsilon) = -u_2(r+\epsilon),\\
u_2(r) = 1.
\end{array}
\right . , \quad |x|=z \in [r, r+\epsilon].
\end{equation*}
The solutions are given by
\begin{align*}
u_1(z) = 1 - \frac{(r-\epsilon) \ln (r/z)}{1 + (r-\epsilon) \ln(r/(r-\epsilon))}, \quad z \in [r-\epsilon, r]
\end{align*}
and
\begin{align*}
u_2(z) = 1 - \frac{(r+\epsilon) \ln(z/r)}{1 + (r+\epsilon) \ln((r+\epsilon)/r)}, \quad z \in [r, r+\epsilon]
\end{align*}
from which
\begin{align*}
u_1(r-\epsilon) = \frac{1}{1 + 1/\rho_1}, \quad u_2(r+\epsilon) = \frac{1}{1 + 1/\rho_2}
\end{align*}
as stated before.\\

We are therefore able to establish an explicit relationship for the difference 
\begin{align*}
\gamma^+_t(\Gamma_1) - \gamma^+_t(\Gamma_2) \quad \textrm{where} \quad \gamma^+_t(\Gamma_i) = \int_0^t \mathbf{1}_{\Gamma_i}(X^+_s) d\gamma^+_s, \quad  i=1,2
\end{align*}
and 
\begin{align*}
\gamma^+_t := \gamma^+_t(\Gamma_1 \cup \Gamma_2) = \gamma^+_t(\Gamma_1) + \gamma^+_t(\Gamma_2)
\end{align*}
is the total local time accumulated up to time $t$. Let us define the inverse $r^+_t$ for which $\gamma^+_{r^+_t}=t$. As in the previous section we can write
\begin{equation}
\gamma^+_{r^+_t}(\Gamma_1) - \gamma^+_{r^+_t}(\Gamma_2) = \int_0^t \tilde{\theta}_s \, ds
\end{equation}
where
\begin{align*}
\tilde{\theta}_t = \left( \mathbf{1}_{\Gamma_1}(X^+_{r^+_t}) - \mathbf{1}_{\Gamma_2}(X^+_{r^+_t}) \right) \in \{-1, +1\}, \quad t\geq 0
\end{align*}
provides the position of the trace process $X^+_{r^+_t}$. Radial symmetry guarantees the Markov property. Consequently, we expect that $\{\tilde{\theta}_t\}_{t \geq 0}$ is a continuous-time Markov chain with state space $\{-1, +1\}$. For example, given a renewal process $\{\tilde{N}_t\}_{t\geq 0}$ and the sequence of epochs $\{\tilde{T}_j\}_{j \in \mathbb{N}_0}$ we can write $\theta^*_j = \tilde{\theta}_{\tilde{T}_j}$ for $j \in \mathbb{N}$, thereby identifying an analogous construction to the one in Section \ref{sec:Path}. \\

Thus, $X^+$ accumulates an exponential amount of time $\gamma^+_{\tau^+_{\partial \Omega}}$ on a given boundary before leaving for the next boundary accumulation (that is, before reaching the middle membrane $\Gamma$ and moving to either $\Gamma_1$ or $\Gamma_2$). By definition, $\bar{X}$ accumulates a Mittag-Leffler amount of time  $H_{\gamma^+_{\tau^+_{\partial \Omega}}}$, allowing us to recover the same arguments as in the previous sections dealing with $\Lambda_\epsilon = [a, b]$. However, those results do not apply exactly here. Indeed, due to the curvatures of the internal and external boundaries, the exponential r.v.'s  are i.i.d. only for a given boundary. Nevertheless, they remain independent with respect to the total local time accumulation. In this case,  we likewise have
\begin{align*}
\gamma^+_{r^+_t}(\Gamma_1) + \gamma^+_{r^+_t}(\Gamma_2) = t
\end{align*}
which corresponds to the decomposition
\begin{align*}
\sum_{j=1}^{N_t} e^*_j + \big(t - \sum_{j=1}^{N_t} e^*_j\big) = t
\end{align*}
where $\{e^*_j\}_{j \in \mathbb{N}}$ are independent exponential r.v.'s whose distributions depend on $\theta^*$, which in turns characterizes the variables in terms of the appropriate parameter $\rho_i$, $i=1,2$.

\begin{figure}
\centering
\includegraphics[scale=1]{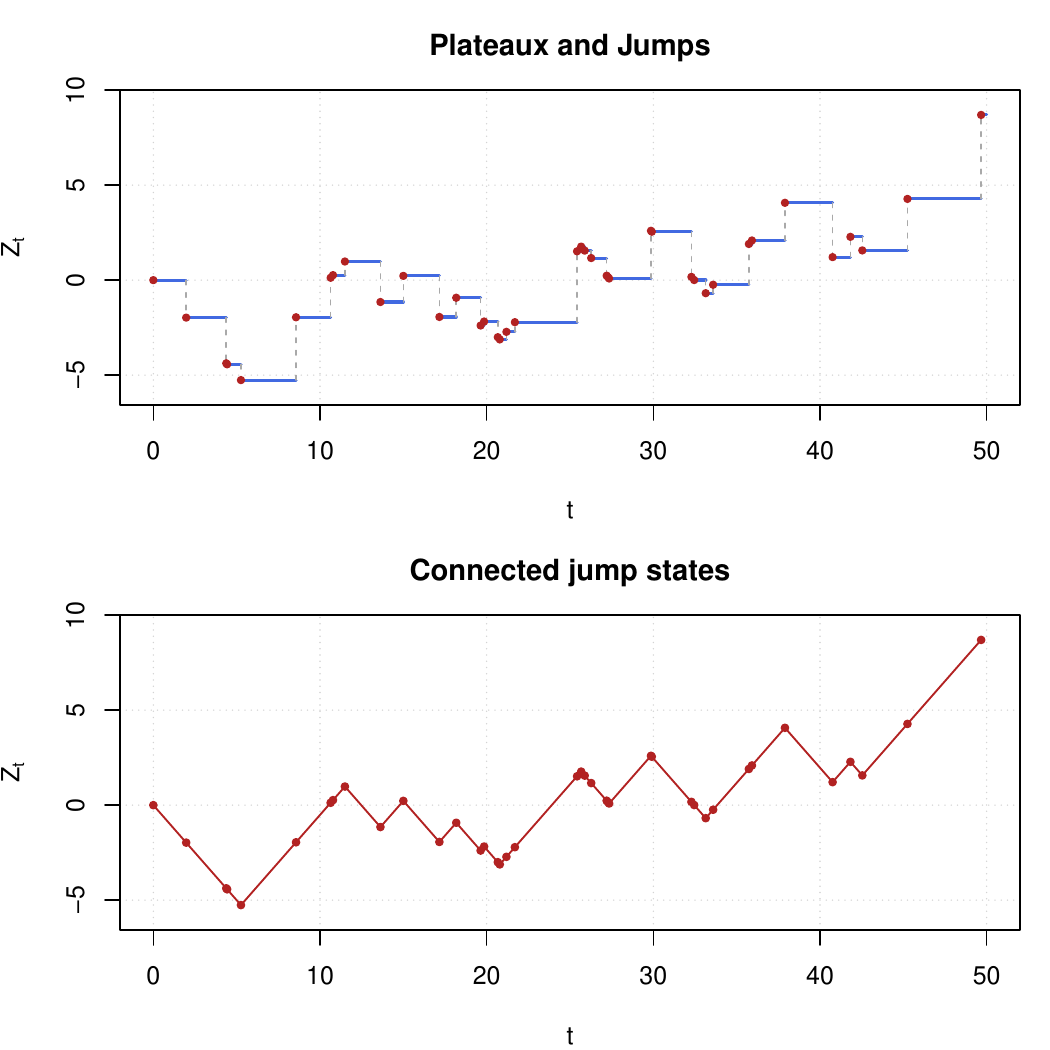} 
\caption{A realization of $Z$ for $\alpha=1$. The spatial jump size equals the time step, $\Delta x = \Delta t \sim Exp(\sigma/\eta)$. The residual process is linear.}
\label{fig:uno}
\end{figure}

\begin{figure}
\centering
\includegraphics[scale=1]{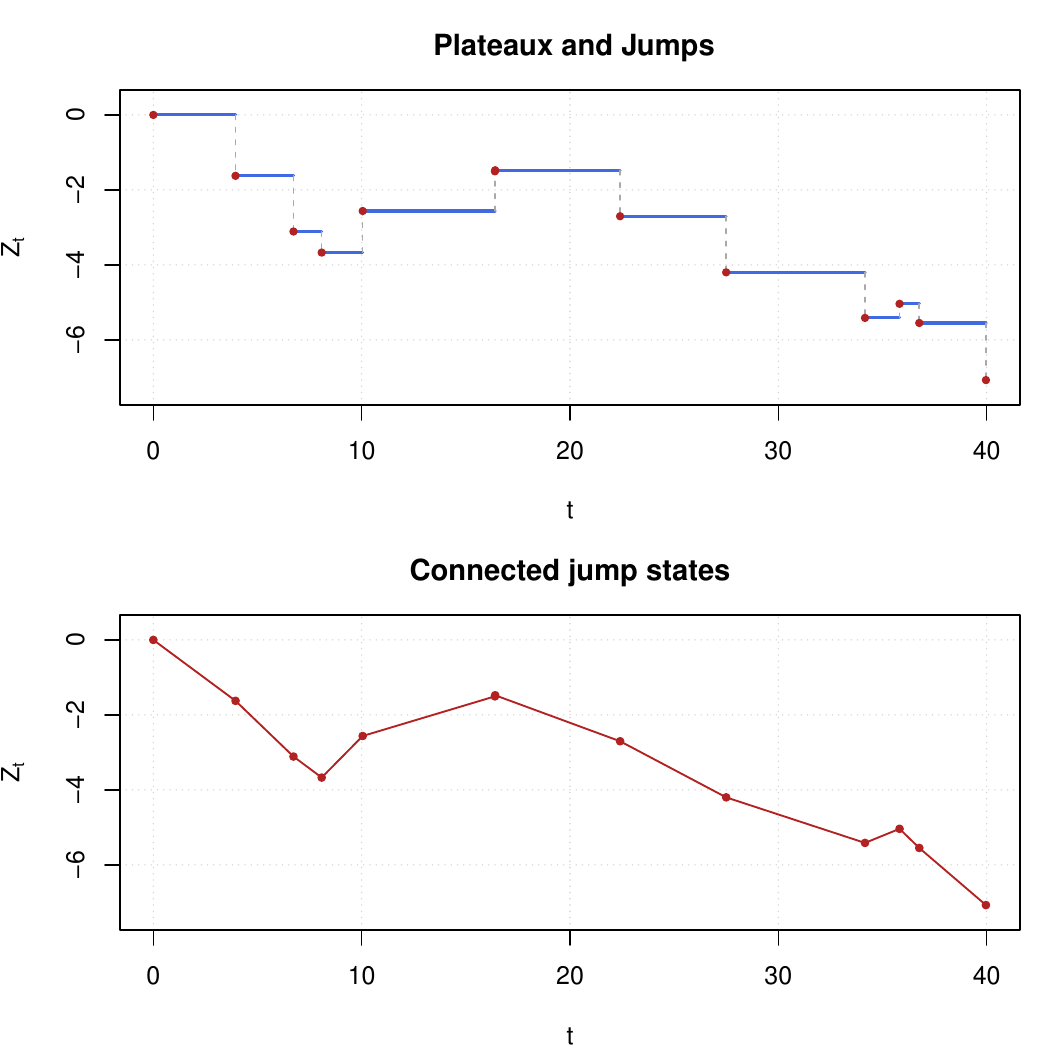} 
\caption{A realization of $Z$ for $\alpha=0.7$. The spatial jump size equals the time step, $\Delta x \neq \Delta t \sim H_{Exp(\sigma/\eta)}$. The residual process is linear.}
\label{fig:alpha}
\end{figure}

\begin{figure}
\centering
\includegraphics[scale=1]{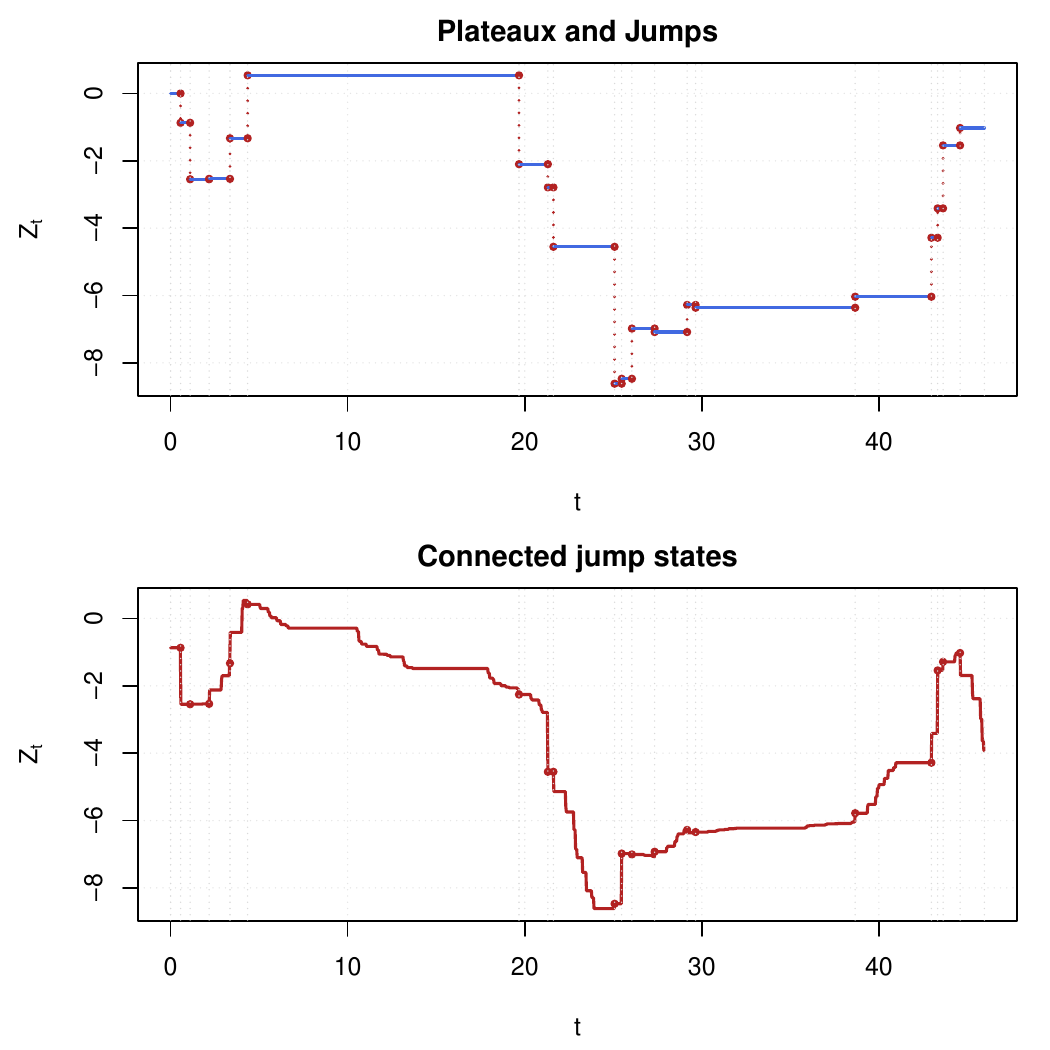} 
\caption{A realization of $Z$ for $\alpha=0.7$. The spatial jump size equals the time step, $\Delta x \neq \Delta t \sim H_{Exp(\sigma/\eta)}$. The residual process is non-linear. The vertical lines do not represent jumps; it should be emphasized that between renewal points, the residual process (that is $L$) can increase rapidly along a steep linear path. }
\label{fig:Ztel}
\end{figure}

\begin{figure}
\centering
\includegraphics[scale=.8]{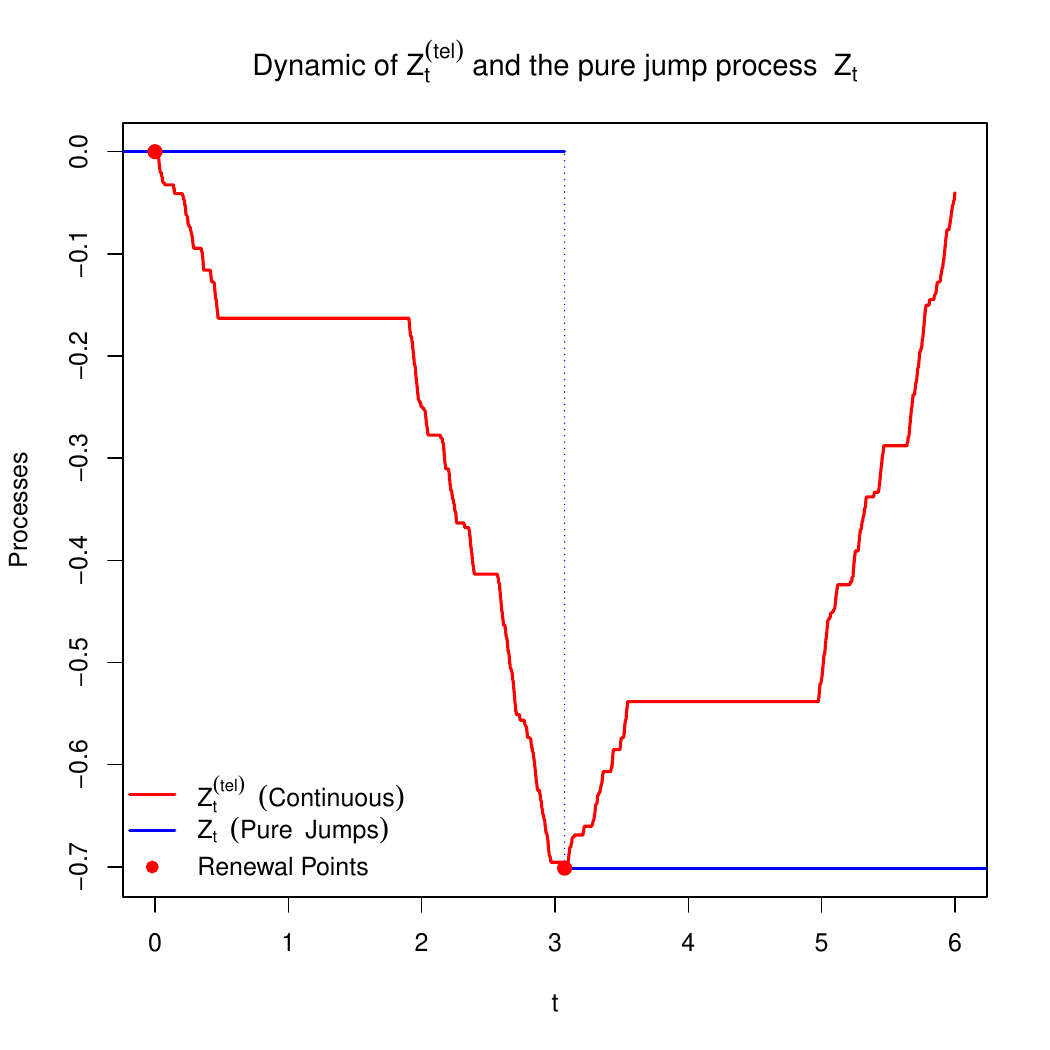} 
\caption{A realization of $Z$ and $Z^\mathtt{tel}$ for $\alpha=0.8$. The picture shows the path of $Z^\mathtt{tel}$ before and after a renewal point; instead of a straight line, the trajectory of $L$ connects two consecutive renewal points.}
\label{fig:ZtelPart}
\end{figure}

\end{document}